\theoremstyle{plain}
\newtheorem{theorem}{Theorem}[section]
\newtheorem{lemma}[theorem]{Lemma}
\newtheorem{corollary}[theorem]{Corollary}
\newenvironment{solution*}{%
  \par\noindent\textbf{Solution. }\normalfont
}{\qed}
\theoremstyle{definition}
\newtheorem{example}{Example}[section]
\newtheorem{definition}[theorem]{Definition}
\theoremstyle{remark}
\newtheorem{remark}[theorem]{Remark}
\newtheorem{question}[]{\textbf{Question}}
\newtheorem{answer}{\textbf{Answer}}
\newtheorem{observation}[]{\textbf{Observation}}
\newtheorem{algorithm}{Algorithm}
\tikzstyle{startstop} = [rectangle, rounded corners, minimum width=3cm, minimum height=1cm,text centered, draw=black, fill=red!20]
\tikzstyle{process} = [rectangle, minimum width=3cm, minimum height=1cm, text centered, draw=black, fill=blue!10]
\tikzstyle{decision} = [diamond, minimum width=3cm, minimum height=1cm, text centered, draw=black, fill=green!20]
\tikzstyle{arrow} = [thick,->,>=stealth]
\newcommand{\mbb}[1]{\mathbb{#1}}
\newcommand{\mrm}[1]{\mathrm{#1}}
\newcommand{\btext}[1]{\mathbf{#1}}
\def\@setcopyright{}
\def\serieslogo@{}
\begin{document}
\marginsize{2.54 cm}{2.54 cm}{2.54 cm}{2.54 cm}
\author{Sanjay Mishra}
\address{Department of Mathematics \newline \indent Amity School of Applied Sciences, Amity University Lucknow Campus, UP, India}
\email{drsmishraresearch@gmail.com}
\title[Simplicial Homology Groups]{Simplicial Homology Groups}
\begin{abstract}
This expository article presents a self-contained introduction to simplicial homology for finite simplicial complexes, emphasizing concrete computation and geometric intuition. Beginning with orientations of simplices and the construction of free abelian chain groups, the boundary operators are defined via the alternating-sum formula and shown to satisfy the chain-complex identity that the boundary of a boundary vanishes. Cycles and boundaries are then developed as kernels and images of the boundary maps, leading to homology groups that capture connected components, independent loops, and higher-dimensional voids. Throughout, detailed low-dimensional examples and step-by-step matrix calculations illustrate how to form boundary matrices, compute kernels and images, and identify generators and relations in \(H_p\). The presentation highlights universal properties of chain groups, clarifies sign conventions and induced orientations, and demonstrates the invariance of homology under combinatorial refinements, thereby connecting geometric features of spaces to computable algebraic invariants.
\end{abstract}

\subjclass[2020] {Primary 55N10; Secondary 55U10, 55U15, 55-01.}
\keywords{Simplicial complexes; Oriented simplices; Chain complexes; Boundary operator; Homology groups.}

\maketitle
\tableofcontents 
\section{Introduction}\label{s:Introduction}

Simplicial homology furnishes a sequence of abelian groups that quantify the connected components, loops, and higher-dimensional voids of a space modeled by a simplicial complex, in a way that is invariant under combinatorial refinements of the triangulation. In this article, the development is self-contained and example-driven: starting from orientations of simplices to control signs, passing through chain groups and boundary operators, and culminating in the construction and computation of homology groups as stable algebraic invariants.

The need for orientation appears at the outset. Assigning a coherent orientation to each simplex ensures that boundary computations respect cancellation on shared faces, making precise the principle that the boundary of a boundary vanishes. With orientations fixed, the group \(C_p(K)\) of \(p\)-chains is introduced as the free abelian group generated by the oriented \(p\)-simplices of a simplicial complex \(K\). This linear-algebraic viewpoint is emphasized throughout, preparing the way for explicit boundary matrices and rank computations.

The boundary operator \(\partial_p \colon C_p(K)\to C_{p-1}(K)\) is defined on elementary simplices by the alternating-sum formula and extended linearly to all chains. Worked examples in low dimensions illustrate induced orientations on faces and clarify sign conventions. A direct verification that \(\partial_{p-1}\circ \partial_p = 0\) establishes a chain complex, and a universal mapping property for chain groups highlights how data specified on generators extend uniquely to homomorphisms.

Cycles and boundaries arise naturally as \(Z_p(K) = \ker \partial_p\) and \(B_p(K)=\operatorname{im}\partial_{p+1}\), with \(B_p(K)\subseteq Z_p(K)\) yielding the \(p\)-th homology group
\[
H_p(K) \;=\; Z_p(K)\big/ B_p(K).
\]
These groups connect algebra to geometry: \(H_0\) detects connected components, \(H_1\) records independent loops that do not bound filled \(2\)-chains, \(H_2\) captures closed surfaces that do not bound \(3\)-chains, and higher groups continue this pattern in dimension. Short, concrete computations make these interpretations explicit.

On the computational side, the workflow is systematic: choose orientations, form chain groups, build boundary matrices, compute kernels and images, and take quotients to obtain \(H_p(K)\). Examples show when \(H_1\cong \mathbb{Z}\) (as for a single essential loop) and when it vanishes (as for a filled triangle), and how comparing \(\mathrm{rank}(\ker \partial_1)\) with \(\mathrm{rank}(\operatorname{im}\partial_2)\) resolves ambiguous cases. Throughout, the exposition balances formal definitions with step-by-step calculations, demystifying sign conventions, basis choices, and matrix manipulations that often obscure first encounters with simplicial homology.

Finally, while subdivisions change the combinatorics of a complex, the resulting homology remains unchanged, reflecting the underlying space rather than any specific triangulation. This robustness is the central theme: homology translates geometric features into computable algebra, preserving essential structure across equivalent models of the same space.


\section{Oriented Simplex}\label{s:Oriented Simplex}
The primary objective of introducing an oriented simplex is to assign a consistent direction or ordering to the vertices of a simplex, which is essential for defining boundary operators in homology theory. This orientation allows us to handle sign conventions systematically when computing boundaries, ensuring that shared faces between adjacent simplices cancel appropriately in a chain. Without orientation, we couldn’t define boundaries in a way that preserves algebraic structure, making orientation a crucial concept for the development of homology groups.

\begin{definition}[Permutation of the Vertices of a Simplex]\label{d:Permutation of the Vertices of a Simplex}
Let $\sigma_p$ be a $p$-dimensional simplex with ordered vertex set
$V = \{\mathbf{a}_0, \mathbf{a}_1, \ldots, \mathbf{a}_p\}$. A permutation of the vertices is a bijective function
\[
f \colon \{0, 1, \ldots, p\} \to \{0, 1, \ldots, p\},
\]
which gives a new ordering of the vertices as
\[
(\mathbf{a}_{f(0)}, \mathbf{a}_{f(1)}, \ldots, \mathbf{a}_{f(p)}).
\]
\end{definition}

\begin{observation}
The expression
\[
(\mathbf{a}_{f(0)}, \mathbf{a}_{f(1)}, \ldots, \mathbf{a}_{f(p)})
\]
represents the reordered list of vertices of the simplex after applying the permutation $f$ to the original ordered vertex set.

Here’s a breakdown:
\begin{itemize}
    \item Initially, the vertices of the $p$-dimensional simplex are ordered as
    \[
    \{\mathbf{a}_0, \mathbf{a}_1, \ldots, \mathbf{a}_p\}.
    \]
    \item The permutation $f$ is a function that rearranges these indices. For example, if $f$ sends $0 \to 2$, $1 \to 0$, and $2 \to 1$, then the permutation will reorder the vertices as
    \[
    (\mathbf{a}_2, \mathbf{a}_0, \mathbf{a}_1).
    \]
    \item In this case:
    \begin{itemize}
        \item $\mathbf{a}_{f(0)}$ means the vertex corresponding to index $0$ after applying $f$,
        \item $\mathbf{a}_{f(1)}$ means the vertex corresponding to index $1$ after applying $f$, and so on.
    \end{itemize}
\end{itemize}

Thus, the tuple
\[
(\mathbf{a}_{f(0)}, \mathbf{a}_{f(1)}, \ldots, \mathbf{a}_{f(p)})
\]
represents the new order of the vertices after applying the permutation $f$ to the original ordering.
\end{observation}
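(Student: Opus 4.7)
The plan is to unpack Definition \ref{d:Permutation of the Vertices of a Simplex} and confirm that the tuple $(\mathbf{a}_{f(0)}, \mathbf{a}_{f(1)}, \ldots, \mathbf{a}_{f(p)})$ is in fact the reordered vertex list obtained by applying $f$. Since the observation is essentially a notational unfolding rather than a mathematical claim with content, the argument consists of parsing the composite assignment $i \mapsto f(i) \mapsto \mathbf{a}_{f(i)}$ and verifying that this composite produces a genuine permutation of the original vertex list $(\mathbf{a}_0, \mathbf{a}_1, \ldots, \mathbf{a}_p)$.

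First I would observe that, for each slot $i \in \{0, 1, \ldots, p\}$, the symbol $\mathbf{a}_{f(i)}$ selects the vertex whose original label equals the image $f(i)$. Listing these symbols as $i$ ranges over $0, 1, \ldots, p$ produces the sequence displayed in the observation. I would then invoke the bijectivity of $f$ to argue that the multiset of entries equals $\{\mathbf{a}_0, \mathbf{a}_1, \ldots, \mathbf{a}_p\}$: surjectivity guarantees that every original vertex appears at least once, and injectivity guarantees that no vertex is repeated. Hence the tuple is indeed a reordering of the original vertex list, justifying the description as a ``new order of the vertices.''

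Next I would instantiate the framework with the worked example $f(0)=2$, $f(1)=0$, $f(2)=1$, substituting directly to obtain $(\mathbf{a}_2, \mathbf{a}_0, \mathbf{a}_1)$. This makes transparent the rule that the $i$-th slot of the output tuple is populated by the vertex indexed by $f(i)$, so that the bullet points in the observation follow by direct inspection.

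The main obstacle is conceptual rather than technical: one must distinguish between the \emph{active} viewpoint (the permutation moves vertex $\mathbf{a}_i$ to a new position) and the \emph{passive} viewpoint (the permutation specifies which original vertex is read off at each new position). The notation $(\mathbf{a}_{f(0)}, \mathbf{a}_{f(1)}, \ldots, \mathbf{a}_{f(p)})$ corresponds to the passive viewpoint, and I would stress that this is the convention tacitly adopted. Once this distinction is articulated, everything else reduces to substitution and an appeal to bijectivity.
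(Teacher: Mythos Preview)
Your proposal is correct and matches the paper's treatment: the observation is itself purely expository, so the paper offers no separate proof beyond the breakdown already contained in the statement. Your unpacking of the composite $i \mapsto f(i) \mapsto \mathbf{a}_{f(i)}$, the appeal to bijectivity, and the worked example are exactly in the same spirit, with the only addition being your explicit remark on the active-versus-passive viewpoint, which is a helpful clarification but not something the paper addresses.
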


\begin{example}
Consider 2-simplex $\sigma_2$ (a triangle) and its vertices labeled as $\mathbf{a}_0, \mathbf{a}_1, \mathbf{a}_2$. So, the initial ordering of the vertices is $V = \{\mathbf{a}_0, \mathbf{a}_1, \mathbf{a}_2\}$. Now, consider a permutation $\sigma$ of the vertices. For instance, let’s say:
\[
f \colon \{0, 1, 2\} \to \{0, 1, 2\}, \text{defined by} \, f(0) = 1, f(1) = 2, f(2) = 0\]
This means that the permutation will reorder the vertices as:
\[
(\mathbf{a}_{f(0)}, \mathbf{a}_{f(1)}, \mathbf{a}_{f(2)}) = (\mathbf{a}_1, \mathbf{a}_2, \mathbf{a}_0).
\]
So, after applying the permutation $f$, the vertices of the simplex are now in the order $\mathbf{a}_1, \mathbf{a}_2, \mathbf{a}_0$, which is a rearranged ordering compared to the original $(\mathbf{a}_0, \mathbf{a}_1, \mathbf{a}_2)$.
\end{example}

\begin{definition}[Even and Odd Permutations of the Vertex Set]\label{d:Even and Odd Permutations of the Vertex Set}
Let $\sigma_p$ be a $p$-dimensional simplex with ordered vertex set $V = \{\mathbf{a}_0, \mathbf{a}_1, \ldots, \mathbf{a}_p\}$.
Let $f \colon \{0, 1, \ldots, p\} \to \{0, 1, \ldots, p\}$ be a permutation function that rearranges the vertices as  $(\mathbf{a}_{f(0)}, \mathbf{a}_{f(1)}, \ldots, \mathbf{a}_{f(p)})$. Then:
\begin{itemize}
\item The permutation $f$ is called \textbf{even} if it can be expressed as a composition of an even number of transpositions (i.e., swaps of two vertices).
\item The permutation $f$ is called \textbf{odd} if it can be expressed as a composition of an odd number of transpositions.
\end{itemize}

The \textbf{sign} of the permutation $f$, denoted $\operatorname{sgn}(f)$, is defined as:
\[
\operatorname{sgn}(f) =
\begin{cases}
+1, & \text{if } f \text{ is even,} \\
-1, & \text{if } f \text{ is odd.}
\end{cases}
\]
\end{definition}

\begin{example}\label{eg:Even and Odd Permutations on a 2-Simplex}
Let $\sigma_2$ be a 2-simplex with vertex set $V = \{\mathbf{a}_0, \mathbf{a}_1, \mathbf{a}_2\}$. Let natural ordering is $(\mathbf{a}_0, \mathbf{a}_1, \mathbf{a}_2)$.

\begin{enumerate}
\item \textbf{Even Permutation:} Let $f$ be the permutation on $V$ defined by 
\[
f(0) = 1, \quad f(1) = 2, \quad f(2) = 0,
\]
which gives,
\[
(\mathbf{a}_{f(0)}, \mathbf{a}_{f(1)}, \mathbf{a}_{f(2)}) = (\mathbf{a}_1, \mathbf{a}_2, \mathbf{a}_0).
\]
We can write this as a composition of two transpositions $f = (0\ 2)(0\ 1)$, because
\begin{itemize}
    \item Swap $\mathbf{a}_0 \leftrightarrow \mathbf{a}_1$ $\Rightarrow$ $(\mathbf{a}_1, \mathbf{a}_0, \mathbf{a}_2)$
    \item Then swap $\mathbf{a}_0 \leftrightarrow \mathbf{a}_2$ $\Rightarrow$ $(\mathbf{a}_1, \mathbf{a}_2, \mathbf{a}_0)$
\end{itemize}
Since it takes two transpositions (even), $f$ is an even permutation. 
Hence, $\operatorname{sgn}(f) = +1$.
\item \textbf{Odd Permutation:} Let $g$ be the permutation on $V$ defined by 
\[
g(0) = 1, \quad g(1) = 0, \quad g(2) = 2,
\]
which gives,
\[
(\mathbf{a}_{g(0)}, \mathbf{a}_{g(1)}, \mathbf{a}_{g(2)}) = (\mathbf{a}_1, \mathbf{a}_0, \mathbf{a}_2).
\]
This is a single transposition $g = (0\ 1)$, since we just swap $\mathbf{a}_0 \leftrightarrow \mathbf{a}_1$. One transposition (odd) implies $g$ is an odd permutation. So, $\operatorname{sgn}(g) = -1$.
\end{enumerate}
\end{example}

\begin{algorithm}[Algorithm to Check if a Permutation is Even or Odd]
Let the permutation $f$ be given in list form 
\[
f = [f(0), f(1), \ldots, f(p)].
\]    
Apply following steps to check $f$ is even or odd. 
\begin{enumerate}
 \item \textbf{Count all inversions} in the list.  
    An \textbf{inversion} is a pair \( (i, j) \) such that:
    \[
    i < j \quad \text{and} \quad f(i) > f(j).
    \] 
 \item Let \( N \) be the total number of inversions.

    \item Then:
    \begin{itemize}
        \item If \( N \) is \textbf{even}, the permutation is \textbf{even}.
        \item If \( N \) is \textbf{odd}, the permutation is \textbf{odd}.
    \end{itemize}
\end{enumerate}
\end{algorithm}

\begin{example}
Let the permutation $f$ be given in list form 
\[
f = [f(0) = 2, f(1) = 0, f(2) = 1]
\]    
We count inversions:
\begin{itemize}
    \item \( f(0) = 2 > f(1) = 0 \Rightarrow \text{inversion} \)
    \item \( f(0) = 2 > f(2) = 1 \Rightarrow \text{inversion} \)
    \item \( f(1) = 0 < f(2) = 1 \Rightarrow \text{no inversion}\)
\end{itemize}
Since total number of inversion is 2, therefore the permutation $f$ is even. 
\end{example}

\begin{definition}[Equivalence of Ordering of Vertex]\label{d:Equivalence of Ordering of Vertex}
Two orderings of the vertex set  $V = \{\mathbf{a}_0, \mathbf{a}_1, \ldots, \mathbf{a}_p\}$
are said to be equivalent if the permutation that transforms one ordering into the other is an even permutation. In other words, two orderings are equivalent if one can be obtained from the other by applying an even number of transpositions (swaps of two vertices).
\end{definition}

\begin{example}\label{eg:Equivalence of Ordering of Vertices}
Let the vertex set of a 2-simplex be \( V = \{\mathbf{a}_0, \mathbf{a}_1, \mathbf{a}_2\} \). Consider two different orderings of the vertices. The first ordering is \( (\mathbf{a}_0, \mathbf{a}_1, \mathbf{a}_2) \), and the second ordering is \( (\mathbf{a}_1, \mathbf{a}_2, \mathbf{a}_0) \). The permutation \( f \) that maps the first to the second is given by \( f(0) = 1, f(1) = 2, f(2) = 0 \), which corresponds to the cycle \( (0\ 1\ 2) \). This cycle can be written as a composition of two transpositions: \( (0\ 2)(0\ 1) \), which is an even number of transpositions. Therefore, the second ordering is \textbf{equivalent} to the first.

Now consider another ordering \( (\mathbf{a}_1, \mathbf{a}_0, \mathbf{a}_2) \). The permutation here is \( f(0) = 1, f(1) = 0, f(2) = 2 \), which corresponds to the transposition \( (0\ 1) \), a single (odd) transposition. Hence, this ordering is \textbf{not equivalent} to the original ordering \( (\mathbf{a}_0, \mathbf{a}_1, \mathbf{a}_2) \).
\end{example}

\begin{theorem}[Ordering as an Equivalence Relation]\label{t:Ordering as an Equivalence Relation}
Let $V = \{\mathbf{a}_0, \mathbf{a}_1, \ldots, \mathbf{a}_p\}$ be the vertex set of a $p$-simplex $\sigma_p$, and let $O$ denote the set of all possible orderings of the vertices of $V$. Define a relation $\sim$ on $O$ by:
\[
(o_1 \sim o_2) \iff \text{the permutation that takes } o_1 \text{ to } o_2 \text{ is even}.
\]

Then, $\sim$ is an equivalence relation on $O$, and the equivalence classes under this relation define the two possible orientations of the simplex $\sigma_p$.
\end{theorem}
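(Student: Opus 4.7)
The plan is to verify reflexivity, symmetry, and transitivity of $\sim$ by invoking the group-theoretic properties of the symmetric group $S_{p+1}$ under the sign map $\operatorname{sgn}$, and then to show that the quotient has exactly two classes (for $p \geq 1$). Throughout, I would rely on the fact—implicit in Definition~\ref{d:Even and Odd Permutations of the Vertex Set} and made computable by the inversion-counting algorithm—that parity is a well-defined invariant of a permutation, independent of the chosen transposition decomposition.

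For the three axioms, I would argue as follows. \textbf{Reflexivity:} the permutation carrying an ordering $o$ to itself is the identity, which is a product of zero transpositions and hence even, so $o \sim o$. \textbf{Symmetry:} if $\pi = \tau_1 \tau_2 \cdots \tau_{2k}$ is an even product of transpositions carrying $o_1$ to $o_2$, then $\pi^{-1} = \tau_{2k} \cdots \tau_1$ is a product of the same $2k$ transpositions in reverse, again even, and it sends $o_2$ back to $o_1$. \textbf{Transitivity:} if $\pi$ sends $o_1$ to $o_2$ via $2k$ transpositions and $\rho$ sends $o_2$ to $o_3$ via $2\ell$ transpositions, then $\rho \circ \pi$ sends $o_1$ to $o_3$ and admits a decomposition of length $2(k+\ell)$, so it is even.

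To count the equivalence classes, I would fix $o_0 = (\mathbf{a}_0, \ldots, \mathbf{a}_p)$ as a reference and let $o_1$ be obtained from $o_0$ by the single transposition $(0\ 1)$, so the connecting permutation is odd and hence $o_0 \not\sim o_1$; this produces at least two distinct classes. For any further ordering $o'$ reached from $o_0$ by an odd permutation $\tau$, the permutation $\tau \circ (0\ 1)$ carrying $o_1$ to $o'$ is a product of an odd number of transpositions followed by one more, hence even, so $o' \sim o_1$. Meanwhile, any ordering reached from $o_0$ by an even permutation lies in $[o_0]$ by definition. Thus every ordering falls in exactly one of $[o_0]$ or $[o_1]$, and these two classes are what one labels as the two orientations of $\sigma_p$.

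The main obstacle—more subtle than the axiom checks—is the tacit use of the well-definedness of parity, namely that no permutation can simultaneously be expressed as both an even and an odd product of transpositions. This is the classical statement that $\operatorname{sgn} \colon S_{p+1} \to \{\pm 1\}$ is a well-defined homomorphism, and the paper's inversion-counting algorithm supplies a concrete invariant (the inversion count modulo $2$) witnessing it. I would not re-prove that result inside this argument but would invoke it once, at the point where reflexivity, symmetry, and transitivity are reduced to parity arithmetic. The edge case $p = 0$ is anomalous—there is only a single ordering and hence a single trivial class—so the "two orientations" assertion should be read as applying for $p \geq 1$, where the sign map is surjective onto $\{\pm 1\}$.
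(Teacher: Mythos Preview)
The paper states this theorem without proof---the theorem environment is immediately followed by Definition~\ref{d:Orientation of Simplex}, with no intervening proof block---so there is no argument in the paper to compare against. Your proof is correct and complete: the three axioms reduce cleanly to the identity being even, inverses preserving parity, and compositions adding transposition counts, while the two-class count is exactly the observation that $\operatorname{sgn}$ is surjective onto $\{\pm 1\}$ for $p \geq 1$. Your explicit flagging of the well-definedness of parity (no permutation is both even and odd) as the one nontrivial ingredient, together with the $p=0$ edge case, shows appropriate care; the paper's inversion-counting algorithm does indeed supply the invariant you cite.
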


\begin{definition}[Orientation of Simplex]\label{d:Orientation of Simplex}
Let $\sigma_p$ be a $p$-simplex with vertex set $V = \{\mathbf{a}_0, \mathbf{a}_1, \ldots, \mathbf{a}_p\}$. An  orientation of $\sigma_p$ is defined as an equivalence class of orderings of its vertices, where two orderings are considered equivalent if the permutation that transforms one into the other is an even permutation.

In simple terms, a $p$-simplex has exactly two orientations: one corresponding to the class of even permutations of a given ordering, and the other to the class of odd permutations.
\end{definition}
\begin{example}\label{eg:Orientation of a 2-Simplex}
Let the vertex set of a 2-simplex $\sigma_2$ be $V = \{\mathbf{a}_0, \mathbf{a}_1, \mathbf{a}_2\}$. The set of all orderings of these vertices includes the following six permutations:  
\[S = \{(\mathbf{a}_0, \mathbf{a}_1, \mathbf{a}_2), (\mathbf{a}_1, \mathbf{a}_2, \mathbf{a}_0),(\mathbf{a}_2, \mathbf{a}_0, \mathbf{a}_1), (\mathbf{a}_0, \mathbf{a}_2, \mathbf{a}_1), (\mathbf{a}_2, \mathbf{a}_1, \mathbf{a}_0), (\mathbf{a}_1, \mathbf{a}_0, \mathbf{a}_2)\}\]
Now, consider the sign of the permutation required to transform the standard ordering $(\mathbf{a}_0, \mathbf{a}_1, \mathbf{a}_2)$ into each of the others. The permutations
\[(\mathbf{a}_0, \mathbf{a}_2, \mathbf{a}_1), \quad
(\mathbf{a}_1, \mathbf{a}_2, \mathbf{a}_0),\quad (\mathbf{a}_2, \mathbf{a}_0, \mathbf{a}_1)
\]
are even permutations, each with $\operatorname{sgn}(f) = +1$. Thus, they belong to the same equivalence class as the standard ordering and represent the same orientation of the simplex.

On the other hand, the permutations
\[
(\mathbf{a}_0, \mathbf{a}_2, \mathbf{a}_1),\quad (\mathbf{a}_2, \mathbf{a}_1, \mathbf{a}_0),\quad (\mathbf{a}_1, \mathbf{a}_0, \mathbf{a}_2)
\]
are odd permutations, each with $\operatorname{sgn}(f) = -1$, and therefore belong to a different equivalence class, representing the opposite orientation.

Hence, the orientation of a 2-simplex is determined by the equivalence class of vertex orderings under even permutations. There are exactly two such classes, corresponding to the two possible orientations of the simplex as follows:
\begin{gather*}
E=
\{ (\mathbf{a}_0, \mathbf{a}_1, \mathbf{a}_2), (\mathbf{a}_1, \mathbf{a}_2, \mathbf{a}_0), (\mathbf{a}_2, \mathbf{a}_0, \mathbf{a}_1) \}\\ 
O =\{ (\mathbf{a}_0, \mathbf{a}_2, \mathbf{a}_1), (\mathbf{a}_2, \mathbf{a}_1, \mathbf{a}_0), (\mathbf{a}_1, \mathbf{a}_0, \mathbf{a}_2) \}
\end{gather*}
\end{example}

\begin{definition}[Oriented Simplex]\label{d:Oriented Simplex}
An \textbf{oriented simplex} is a simplex together with a choice of one of the two possible orientations of its ordered vertices. More formally, if $\sigma_p = (\mathbf{a}_0, \mathbf{a}_1, \ldots, \mathbf{a}_p)$ is a $p$-simplex with an ordered set of vertices, then the oriented simplex is denoted by:
\[
[\mathbf{a}_0, \mathbf{a}_1, \ldots, \mathbf{a}_p]
\]
where the square brackets indicate that the ordering is taken up to even permutations—that is, any ordering obtained by an even permutation is considered the same orientation, while an odd permutation gives the opposite orientation.
\end{definition}

\begin{example}[Oriented 1-Simplex]\label{eg:Oriented 1-Simplex}
Let the vertex set be $V = \{\mathbf{a}_0, \mathbf{a}_1\}$. There are two possible orderings of these vertices:
\begin{itemize}
  \item $(\mathbf{a}_0, \mathbf{a}_1)$ — this gives the \textbf{positive orientation}, written as:
  \[
  [\mathbf{a}_0, \mathbf{a}_1]
  \]
  \item $(\mathbf{a}_1, \mathbf{a}_0)$ — this is the \textbf{reverse order}, and hence gives the \textbf{opposite orientation}, written as:
  \[
  -[\mathbf{a}_0, \mathbf{a}_1]
  \]
\end{itemize}
Thus, both orderings define the same 1-simplex geometrically, but with opposite orientations.

\begin{figure}[h!]
\centering
\begin{tikzpicture}[scale=1.2, >=stealth]

  \node (A1) at (0,0) [circle,fill,inner sep=2pt,label=below:$\mathbf{a}_0$] {};
  \node (B1) at (3,0) [circle,fill,inner sep=2pt,label=below:$\mathbf{a}_1$] {};
  \draw[->, thick] (A1) -- (B1);
  \node at (1.5,0.4) {$[\mathbf{a}_0, \mathbf{a}_1]$};

  \node (A2) at (6,0) [circle,fill,inner sep=2pt,label=below:$\mathbf{a}_0$] {};
  \node (B2) at (9,0) [circle,fill,inner sep=2pt,label=below:$\mathbf{a}_1$] {};
  \draw[<-, thick] (A2) -- (B2);
  \node at (7.5,0.4) {$[\mathbf{a}_1, \mathbf{a}_0] = -[\mathbf{a}_0, \mathbf{a}_1]$};
\end{tikzpicture}
\caption{Illustration of a 1-simplex with two different orientations. The left shows the positively oriented simplex $[\mathbf{a}_0, \mathbf{a}_1]$, while the right shows the opposite orientation $[\mathbf{a}_1, \mathbf{a}_0] = -[\mathbf{a}_0, \mathbf{a}_1]$.}
\label{fig:oriented-1-simplex}
\end{figure}
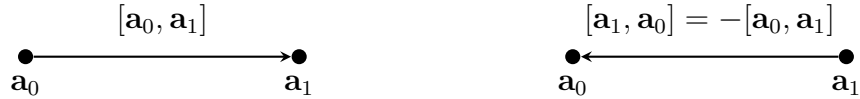
\end{example}

\begin{example}[Oriented 2-Simplex]\label{eg:Oriented 2-Simplex}
Let the vertex set be $V = \{\mathbf{a}_0, \mathbf{a}_1, \mathbf{a}_2\}$. Then the ordered simplex $\sigma_2 = (\mathbf{a}_0, \mathbf{a}_1, \mathbf{a}_2)$ represents one orientation of the 2-simplex, and we write the oriented simplex as:  
\[
[\mathbf{a}_0, \mathbf{a}_1, \mathbf{a}_2].
\]
This oriented simplex also includes the orderings:
\[
[\mathbf{a}_1, \mathbf{a}_2, \mathbf{a}_0] \quad \text{and} \quad [\mathbf{a}_2, \mathbf{a}_0, \mathbf{a}_1],
\]
because they are even permutations of the original ordering and hence define the same orientation.

On the other hand, the orderings
\[
(\mathbf{a}_0, \mathbf{a}_2, \mathbf{a}_1), \quad (\mathbf{a}_2, \mathbf{a}_1, \mathbf{a}_0), \quad (\mathbf{a}_1, \mathbf{a}_0, \mathbf{a}_2)
\]
represent the opposite orientation, and we write that as:
\[
-[\mathbf{a}_0, \mathbf{a}_1, \mathbf{a}_2].
\]
The negative sign (like in $-[\mathbf{a}_0, \mathbf{a}_1, \mathbf{a}_2]$) is a conventional way to indicate that the orientation is opposite to the positively oriented simplex $[\mathbf{a}_0, \mathbf{a}_1, \mathbf{a}_2]$. The positive orientation is counterclockwise while negative orientation is clockwise. 
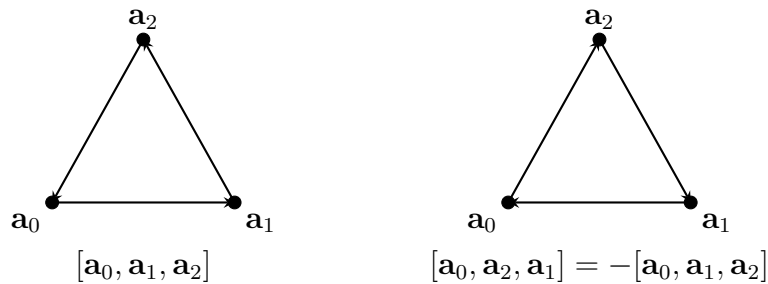
\begin{figure}[h!]
\centering
\begin{tikzpicture}[scale=1.2, >=stealth]

  \coordinate (A1) at (0,0);
  \coordinate (B1) at (2,0);
  \coordinate (C1) at (1,1.8);
  \filldraw (A1) circle (2pt) node[below left] {$\mathbf{a}_0$};
  \filldraw (B1) circle (2pt) node[below right] {$\mathbf{a}_1$};
  \filldraw (C1) circle (2pt) node[above] {$\mathbf{a}_2$};

  \draw[->, thick] (A1) -- (B1);
  \draw[->, thick] (B1) -- (C1);
  \draw[->, thick] (C1) -- (A1);
  \node at (1, -0.7) {$[\mathbf{a}_0, \mathbf{a}_1, \mathbf{a}_2]$};

  \coordinate (A2) at (5,0);
  \coordinate (B2) at (7,0);
  \coordinate (C2) at (6,1.8);
  \filldraw (A2) circle (2pt) node[below left] {$\mathbf{a}_0$};
  \filldraw (B2) circle (2pt) node[below right] {$\mathbf{a}_1$};
  \filldraw (C2) circle (2pt) node[above] {$\mathbf{a}_2$};

  \draw[<-, thick] (A2) -- (B2);
  \draw[<-, thick] (B2) -- (C2);
  \draw[<-, thick] (C2) -- (A2);
  \node at (6, -0.7) {$[\mathbf{a}_0, \mathbf{a}_2, \mathbf{a}_1] = -[\mathbf{a}_0, \mathbf{a}_1, \mathbf{a}_2]$};

\end{tikzpicture}
\caption{Illustration of a 2-simplex (triangle) with two orientations. Left: counterclockwise (positive) orientation $[\mathbf{a}_0, \mathbf{a}_1, \mathbf{a}_2]$. Right: clockwise (negative) orientation $[\mathbf{a}_0, \mathbf{a}_2, \mathbf{a}_1] = -[\mathbf{a}_0, \mathbf{a}_1, \mathbf{a}_2]$.}
\label{fig:oriented-2-simplex}
\end{figure}
\end{example}

\begin{example}[Oriented 3-simplex]\label{eg:Oriented 3-simplex}
Complete this.
\end{example}

\section{Group of Oriented $p$-chains}\label{s:Group of Oriented p chains}
\begin{definition}[$p$-chain on Simplicial Complex]\label{d:p chain on Simplicial Complex}
Let $K$ is simplicial complex. A $p$-chain on $K$ is a function $c$ from the set of oriented $p$-simplices of $K$ to $\mbb{Z}$  ($c\colon \widetilde{K}_p \longrightarrow \mathbb{Z}$ where $\widetilde{K}_p$ is the set of oriented $p$-simplices of $K$), such that 
\begin{enumerate}
\item $c(\sigma) = -c(\sigma')$, if $\sigma$ and $\sigma'$ are opposite orientation of the same simplex.
\item $c(\sigma) = 0$ for all but finitely many oriented $p$-simplices $\sigma$.
\end{enumerate}    
\end{definition}

\begin{example}[$p$-chain on Simplicial Complex]\label{eg:p chain on Simplicial Complex}
Let $K$ be the simplicial complex corresponding to a tetrahedron with vertex set $V = \{\mathbf{a}_0, \mathbf{a}_1, \mathbf{a}_2, \mathbf{a}_3\}$. The simplicial complex $K$ is given as
\[
K = \left\{ \sigma \subseteq V \colon  \sigma \text{ is a non-empty subset of } \{\mathbf{a}_0, \mathbf{a}_1, \mathbf{a}_2, \mathbf{a}_3\} \, \text{with} \, o(\sigma) \leq 4 \right\}
\]
Explicitly, the elements of $K$ are
\[
\begin{aligned}
K = \{ & \{\mathbf{a}_0\}, \{\mathbf{a}_1\}, \{\mathbf{a}_2\}, \{\mathbf{a}_3\}, \\
& \{\mathbf{a}_0, \mathbf{a}_1\}, \{\mathbf{a}_0, \mathbf{a}_2\}, \{\mathbf{a}_0, \mathbf{a}_3\}, \{\mathbf{a}_1, \mathbf{a}_2\}, \{\mathbf{a}_1, \mathbf{a}_3\}, \{\mathbf{a}_2, \mathbf{a}_3\}, \\
& \{\mathbf{a}_0, \mathbf{a}_1, \mathbf{a}_2\}, \{\mathbf{a}_0, \mathbf{a}_1, \mathbf{a}_3\}, \{\mathbf{a}_0, \mathbf{a}_2, \mathbf{a}_3\}, \{\mathbf{a}_1, \mathbf{a}_2, \mathbf{a}_3\}, \\
& \{\mathbf{a}_0, \mathbf{a}_1, \mathbf{a}_2, \mathbf{a}_3\} \}.
\end{aligned}
\]
We now describe the $p$-chains $C_p(K)$ for $p = 0, 1, 2, 3$, using the notation $\widetilde{K}_p$ to denote the set of oriented $p$-simplices of $K$. Each $p$-chain is a function from $\widetilde{K}_p$ to $\mathbb{Z}$ with finite support, and is expressed as a finite formal sum of oriented $p$-simplices with integer coefficients.
\begin{enumerate}
\item \textbf{0-chains:}The set of oriented $0$-simplices is
\[
\widetilde{K}_0 = \{ [\mathbf{a}_0], [\mathbf{a}_1], [\mathbf{a}_2], [\mathbf{a}_3] \}.
\]
A general $0$-chain on $K$ is
\[
c^{(0)} = a_0 [\mathbf{a}_0] + a_1 [\mathbf{a}_1] + a_2 [\mathbf{a}_2] + a_3 [\mathbf{a}_3], \quad \text{where } a_i \in \mathbb{Z}.
\]
\textbf{Example:}Let $c^{(0)}$ be defined on $\widetilde{K}_0$ by
\[
c^{(0)} = 2[\mathbf{a}_0] - 1[\mathbf{a}_1] + 0[\mathbf{a}_2] + 3[\mathbf{a}_3].
\]
This 0-chain assigns integer values to each vertex. A $0$-chain is a formal linear combination of the vertices of the tetrahedron with integer coefficients. Since orientation does not matter for $0$-simplices, the 0-chains simply represent integer-valued weightings on the vertices. In our example, we assign the integer $2$ to vertex $\mathbf{a}_0$, $-1$ to $\mathbf{a}_1$, $0$ to $\mathbf{a}_2$, and $3$ to $\mathbf{a}_3$. 
\item \textbf{1-chains:}The set of oriented $1$-simplices is
\[
\widetilde{K}_1 = \{ [\mathbf{a}_0,\mathbf{a}_1], [\mathbf{a}_0,\mathbf{a}_2], [\mathbf{a}_0,\mathbf{a}_3], [\mathbf{a}_1,\mathbf{a}_2], [\mathbf{a}_1,\mathbf{a}_3], [\mathbf{a}_2,\mathbf{a}_3] \}
\]
A general $1$-chain on $K$ is
\begin{align*}
c^{(1)} & =   b_1[\mathbf{a}_0,\mathbf{a}_1] + b_2[\mathbf{a}_0,\mathbf{a}_2] + b_3[\mathbf{a}_0,\mathbf{a}_3] \\
& + b_4[\mathbf{a}_1,\mathbf{a}_2] + b_5[\mathbf{a}_1,\mathbf{a}_3] + b_6[\mathbf{a}_2,\mathbf{a}_3], \, b_i \in \mathbb{Z}        
\end{align*}
\textbf{Example:}Let $c^{(1)}$ be defined on $\widetilde{K}_1$ by
\[
c^{(1)} = [\mathbf{a}_0,\mathbf{a}_1] - 2[\mathbf{a}_1,\mathbf{a}_2] + [\mathbf{a}_2,\mathbf{a}_3].
\]
This assigns weights to oriented edges, respecting orientation. A $1$-chain is a formal sum of oriented edges (1-simplices), with orientation taken into account. For example, in the $1$-chain the coefficient $1$ on $[\mathbf{a}_0,\mathbf{a}_1]$ indicates one unit along that edge in the direction from $\mathbf{a}_0$ to $\mathbf{a}_1$. The $-2$ on $[\mathbf{a}_1,\mathbf{a}_2]$ means two units in the direction from $\mathbf{a}_2$ to $\mathbf{a}_1$ (opposite orientation), and $1$ on $[\mathbf{a}_2,\mathbf{a}_3]$ is a unit from $\mathbf{a}_2$ to $\mathbf{a}_3$. 
\item \textbf{2-chains:}The set of oriented $2$-simplices is
\[
\widetilde{K}_2 = \{ [\mathbf{a}_0,\mathbf{a}_1,\mathbf{a}_2], [\mathbf{a}_0,\mathbf{a}_1,\mathbf{a}_3], [\mathbf{a}_0,\mathbf{a}_2,\mathbf{a}_3], [\mathbf{a}_1,\mathbf{a}_2,\mathbf{a}_3] \}.
\]
A general $2$-chain on $K$ is
\[
c^{(2)} = c_1[\mathbf{a}_0,\mathbf{a}_1,\mathbf{a}_2] + c_2[\mathbf{a}_0,\mathbf{a}_1,\mathbf{a}_3] + c_3[\mathbf{a}_0,\mathbf{a}_2,\mathbf{a}_3] + c_4[\mathbf{a}_1,\mathbf{a}_2,\mathbf{a}_3], \quad c_i \in \mathbb{Z}.
\]
\textbf{Example:}Let $c^{(2)}$ be defined on $\widetilde{K}_2$ by
\[
c^{(2)} = [\mathbf{a}_0,\mathbf{a}_1,\mathbf{a}_2] - [\mathbf{a}_0,\mathbf{a}_2,\mathbf{a}_3].
\]
A $2$-chain is a formal combination of oriented 2-simplices (triangular faces of the tetrahedron), again with orientation considered. This example represents a sum of two oriented triangles. The first term, $[\mathbf{a}_0,\mathbf{a}_1,\mathbf{a}_2]$, is the face formed by those three vertices in counterclockwise order (or some fixed orientation). The second term is a negatively oriented copy of the face $[\mathbf{a}_0,\mathbf{a}_2,\mathbf{a}_3]$, meaning we traverse the triangle in the opposite direction. 
\item \textbf{3-chains:}The set of oriented $3$-simplices is
\[
\widetilde{K}_3 = \{ [\mathbf{a}_0,\mathbf{a}_1,\mathbf{a}_2,\mathbf{a}_3] \}.
\]
A general $3$-chain on $K$ is
\[
c^{(3)} = d \cdot [\mathbf{a}_0,\mathbf{a}_1,\mathbf{a}_2,\mathbf{a}_3], \quad d \in \mathbb{Z}.
\]
\textbf{Example:}Let $c^{(3)}$ be defined on $\widetilde{K}_3$ by 
\[
c^{(3)} = -1 [\mathbf{a}_0,\mathbf{a}_1,\mathbf{a}_2,\mathbf{a}_3].
\]
The $3$-chains on a tetrahedron are combinations of the full 3-simplex. Since there is only one 3-simplex in the tetrahedron, the set $\widetilde{K}_3$ has only one element: $[\mathbf{a}_0,\mathbf{a}_1,\mathbf{a}_2,\mathbf{a}_3]$. This example assigns the value $-1$ to the single 3-simplex, indicating that it is taken with the opposite orientation. 
\end{enumerate}
\textbf{Note:}In all cases, the coefficients are integers, and orientation matters. Reversing orientation changes the sign of the simplex. Each $p$-chain is a finitely supported function $c \colon \widetilde{K}_p \to \mathbb{Z}$ such that $c(\bar{\sigma}) = -c(\sigma)$ for opposite orientations.
\end{example}

\begin{definition}[$p$-chain group]\label{def:p-chain-group}
Let $K$ be a simplicial complex, and let $\widetilde{K}_p$ denote the set of oriented $p$-simplices of $K$. The \emph{$p$-chain group} of $K$, denoted by $C_p(K)$, is the free abelian group generated by the oriented $p$-simplices in $K$. That is,
\[
C_p(K) = \left\{ c : \widetilde{K}_p \to \mathbb{Z} \;\middle|\;
\begin{aligned}
&c(\sigma) = -c(\sigma') \text{ if } \sigma \text{ and } \sigma' \text{ have opposite orientations,} \\\\
&c(\sigma) = 0 \text{ for all but finitely many } \sigma \in \widetilde{K}_p
\end{aligned}
\right\}.
\]
\end{definition}

\begin{definition}[Group Operation on $C_p(K)$]
Let $K$ be a simplicial complex and let $C_p(K)$ denote the $p$-chain group of $K$. For any two $p$-chains $c_1, c_2 \in C_p(K)$, the group operation $+$ is defined pointwise as follows:
\[
(c_1 + c_2)(\sigma) = c_1(\sigma) + c_2(\sigma) \quad \text{for all } \sigma \in \widetilde{K}_p,
\]
where $\widetilde{K}_p$ is the set of oriented $p$-simplices of $K$.

The zero element of $C_p(K)$ is the function $0$ such that $0(\sigma) = 0$ for all $\sigma \in \widetilde{K}_p$. The inverse of a $p$-chain $c$ is the chain $-c$ defined by
\[
(-c)(\sigma) = -c(\sigma) \quad \text{for all } \sigma \in \widetilde{K}_p.
\]
\end{definition}

\begin{theorem}\label{thm:CpK-is-group}
Let $K$ be a simplicial complex and let $C_p(K)$ be the set of all $p$-chains on $K$. Then $C_p(K)$ forms an abelian group under the operation of pointwise addition defined by
\[
(c_1 + c_2)(\sigma) = c_1(\sigma) + c_2(\sigma), \quad \text{for all } \sigma \in \widetilde{K}_p,
\]
where $\widetilde{K}_p$ is the set of oriented $p$-simplices of $K$.
\end{theorem}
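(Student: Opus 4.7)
The plan is to verify, in turn, each of the abelian group axioms for the set $C_p(K)$ equipped with pointwise addition, leaning on the fact that $(\mathbb{Z},+)$ is itself an abelian group so that associativity, commutativity, and the bulk of the identity/inverse computations are inherited coordinate-wise. The structural work peculiar to this setting is showing that the operation actually stays inside $C_p(K)$, since $p$-chains are not arbitrary integer-valued functions on $\widetilde{K}_p$ but are constrained by the orientation-reversal condition and the finite-support condition from Definition \ref{d:p chain on Simplicial Complex}.

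First I would establish closure. Given $c_1,c_2\in C_p(K)$, set $c:=c_1+c_2$ via $c(\sigma)=c_1(\sigma)+c_2(\sigma)$. To verify the orientation-reversal condition, I would pick any oriented $p$-simplex $\sigma$ and its oppositely oriented partner $\sigma'$, then compute
\[
c(\sigma)=c_1(\sigma)+c_2(\sigma)=-c_1(\sigma')-c_2(\sigma')=-c(\sigma'),
\]
using property (1) of the definition applied to each summand. For finite support, I would observe that $\{\sigma : c(\sigma)\neq 0\}$ is contained in $\{\sigma : c_1(\sigma)\neq 0\}\cup\{\sigma : c_2(\sigma)\neq 0\}$, which is a union of two finite sets and hence finite. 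This step is the only genuinely non-routine one: it is where the specific definition of $C_p(K)$ is used, and sloppy handling of the orientation condition is the main pitfall.

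Next I would dispose of associativity and commutativity by pointwise evaluation: for every $\sigma\in\widetilde{K}_p$,
\[
\bigl((c_1+c_2)+c_3\bigr)(\sigma)=\bigl(c_1+(c_2+c_3)\bigr)(\sigma)
\quad\text{and}\quad
(c_1+c_2)(\sigma)=(c_2+c_1)(\sigma),
\]
each reducing to the corresponding identity in $\mathbb{Z}$. For the identity element, I would take the zero function $0\colon\widetilde{K}_p\to\mathbb{Z}$, noting that it trivially satisfies $0(\sigma)=-0(\sigma')$ and has empty (hence finite) support, and then check $c+0=c$ pointwise. For inverses, given $c\in C_p(K)$ I would define $(-c)(\sigma):=-c(\sigma)$, verify that $-c$ inherits both defining properties (support of $-c$ equals support of $c$, and the sign condition flips consistently), and then confirm $c+(-c)=0$ pointwise.

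I expect the main obstacle to be nothing more than bookkeeping around the orientation condition in the closure and inverse steps; once the closure argument above is written cleanly, all remaining axioms reduce to invoking the group structure of $\mathbb{Z}$ evaluated one oriented simplex at a time. Consequently the proof will be short and essentially mechanical, with the orientation-reversal check being the only place where a reader needs to pause.
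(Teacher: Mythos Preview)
Your proposal is correct and follows essentially the same approach as the paper: a direct verification of the abelian group axioms, with closure handled by checking the orientation-reversal and finite-support conditions, and the remaining axioms reduced to the group structure of $\mathbb{Z}$ via pointwise evaluation. If anything, you are slightly more careful than the paper in explicitly noting that the zero function and $-c$ must themselves satisfy the two defining conditions of a $p$-chain.
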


\begin{proof}
We will verify that $C_p(K)$ satisfies all the axioms of an abelian group under the operation of pointwise addition.
\begin{enumerate}
\item  \textbf{Closure:} Let $c_1, c_2 \in C_p(K)$. Then for each $\sigma \in \widetilde{K}_p$, the function $c_1 + c_2$ is defined by
\[
(c_1 + c_2)(\sigma) = c_1(\sigma) + c_2(\sigma).
\]
Since $c_1$ and $c_2$ are integer-valued and nonzero for only finitely many simplices, the same holds for $c_1 + c_2$. Also, since $c_i(\sigma') = -c_i(\sigma)$ whenever $\sigma'$ is the opposite orientation of $\sigma$, it follows that:
\[
(c_1 + c_2)(\sigma') = c_1(\sigma') + c_2(\sigma') = -c_1(\sigma) - c_2(\sigma) = -(c_1 + c_2)(\sigma).
\]
Thus, $c_1 + c_2$ is a $p$-chain, so $c_1 + c_2 \in C_p(K)$.
\item \textbf{Associativity:} Let $c_1, c_2, c_3 \in C_p(K)$. For all $\sigma \in \widetilde{K}_p$,
\[
((c_1 + c_2) + c_3)(\sigma) = (c_1 + c_2)(\sigma) + c_3(\sigma) = c_1(\sigma) + c_2(\sigma) + c_3(\sigma),
\]
\[
(c_1 + (c_2 + c_3))(\sigma) = c_1(\sigma) + (c_2 + c_3)(\sigma) = c_1(\sigma) + c_2(\sigma) + c_3(\sigma).
\]
Hence, $(c_1 + c_2) + c_3 = c_1 + (c_2 + c_3)$.
\item \textbf{Identity element:} Define $0 \in C_p(K)$ by $0(\sigma) = 0$ for all $\sigma \in \widetilde{K}_p$. Then for any $c \in C_p(K)$,
\[
(c + 0)(\sigma) = c(\sigma) + 0 = c(\sigma).
\]
Hence, $0$ is the identity element.
\item  \textbf{Inverse element:} Let $c \in C_p(K)$. Define $-c \in C_p(K)$ by $(-c)(\sigma) = -c(\sigma)$ for all $\sigma \in \widetilde{K}_p$. Then,
\[
(c + (-c))(\sigma) = c(\sigma) + (-c)(\sigma) = c(\sigma) - c(\sigma) = 0,
\]
so $c + (-c) = 0$, and $-c$ is the inverse of $c$.
\item \textbf{Commutativity:} Let $c_1, c_2 \in C_p(K)$. For all $\sigma \in \widetilde{K}_p$,
\[
(c_1 + c_2)(\sigma) = c_1(\sigma) + c_2(\sigma) = c_2(\sigma) + c_1(\sigma) = (c_2 + c_1)(\sigma).
\]
Thus, $c_1 + c_2 = c_2 + c_1$.
\end{enumerate}
Since all the group axioms are satisfied, $C_p(K)$ is an abelian group under pointwise addition.
\end{proof}

\begin{theorem}\label{thm:CpK-is-free}
Let $K$ be a simplicial complex. Then the $p$-chain group $C_p(K)$ is a free abelian group, and its basis is the set of oriented $p$-simplices in $\widetilde{K}_p$.
\end{theorem}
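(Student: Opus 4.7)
The plan is to exhibit an explicit $\mathbb{Z}$-basis for $C_p(K)$ by selecting, once and for all, a preferred orientation for each unoriented $p$-simplex of $K$. For each such chosen representative $\sigma \in \widetilde{K}_p$, I will introduce an associated elementary chain $\chi_\sigma$ defined by $\chi_\sigma(\sigma) = 1$, $\chi_\sigma(\sigma') = -1$ on the opposite orientation $\sigma'$, and $\chi_\sigma(\tau) = 0$ for every other oriented $p$-simplex $\tau$. That $\chi_\sigma$ actually lies in $C_p(K)$ is a direct check against the two defining conditions of Definition~\ref{def:p-chain-group}: the antisymmetry condition is built into the construction of $\chi_\sigma$, and its support is a single unoriented simplex, hence finite.

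Next I would prove generation. Given an arbitrary $c \in C_p(K)$, the second condition in the definition says that $c$ has finite support, and the first condition says this support is closed under reversal of orientation. Hence the support decomposes into finitely many pairs $\{\sigma_i, \sigma'_i\}$; choosing one representative $\sigma_i$ from each pair and setting $n_i = c(\sigma_i)$, I claim $c = \sum_i n_i \chi_{\sigma_i}$. The verification amounts to evaluating both sides at an arbitrary oriented $p$-simplex and splitting into three cases (at some chosen $\sigma_k$, at its reversal $\sigma'_k$, or at a simplex outside the support), with the antisymmetry of $c$ entering only in the reversal case.

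For linear independence, I would take a finite integer combination $\sum_j m_j \chi_{\sigma_j} = 0$ on distinct chosen representatives and evaluate at each $\sigma_k$ in turn; the sum collapses to $m_k = 0$, so all coefficients vanish. Combined with the previous step, this shows that the chosen representatives form a $\mathbb{Z}$-basis and therefore that $C_p(K)$ is free abelian, of rank equal to the number of unoriented $p$-simplices of $K$.

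The one mildly delicate point — and the place I expect the exposition to need care — is the statement itself: the entire set $\widetilde{K}_p$ cannot literally be a basis, because each simplex appears with both orientations $\sigma$ and $\sigma'$, and the relation $\chi_{\sigma'} = -\chi_\sigma$ is a nontrivial $\mathbb{Z}$-linear dependence. So the proof really rests on interpreting the phrase \emph{basis} as \emph{one oriented representative per unoriented $p$-simplex}; different choices of representatives differ from each other only by sign flips and yield equally valid bases, so no canonical choice is needed. Making this interpretation explicit before carrying out the routine verifications above is the only substantive clarification I would flag; everything else reduces to pointwise evaluation on $\widetilde{K}_p$.
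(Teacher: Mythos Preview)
Your proposal is correct and follows the same overall strategy as the paper: introduce elementary chains as candidate basis elements, then verify spanning and linear independence by pointwise evaluation. You are in fact more careful than the paper's own proof on the two points you flag. The paper defines $\delta_\sigma(\tau)$ to be $1$ at $\tau=\sigma$ and $0$ elsewhere, which as written fails the antisymmetry condition of Definition~\ref{def:p-chain-group} and so does not literally lie in $C_p(K)$; your $\chi_\sigma$, with the value $-1$ on the opposite orientation, is the correct object (and indeed matches the paper's later Definition~\ref{def:elementary-chain}). The paper also takes $\{\delta_\sigma : \sigma \in \widetilde{K}_p\}$ over \emph{all} oriented simplices as the basis, which is linearly dependent for exactly the reason you identify; your insistence on selecting one oriented representative per unoriented simplex is the needed fix, and is essentially what the paper's subsequent Lemma~\ref{l:Computing Bases of Chain Groups via Elementary Chains} restates. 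So the route is the same, but your version closes gaps that the paper's proof of this theorem leaves open and only repairs afterward.
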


\begin{proof}
A group is free if it has a basis, i.e., a set of elements such that every element of the group can be uniquely written as a finite linear combination of these basis elements, with integer coefficients.
\begin{enumerate}
\item \textbf{$C_p(K)$ is free.} We can write each element of $C_p(K)$ as a function from the set of oriented $p$-simplices, $\widetilde{K}_p$, to $\mathbb{Z}$. Each function $c \in C_p(K)$ assigns an integer to each oriented $p$-simplex in $K$ and is determined by the values it takes on these oriented simplices.

For each oriented $p$-simplex $\sigma \in \widetilde{K}_p$, consider the function $\delta_\sigma \in C_p(K)$ defined by:
\[
\delta_\sigma(\tau) =
\begin{cases}
1 & \text{if } \tau = \sigma, \\
0 & \text{otherwise}.
\end{cases}
\]
The functions $\delta_\sigma$ for each $\sigma \in \widetilde{K}_p$ form a set of elements in $C_p(K)$. This set is linearly independent. Indeed, if a linear combination of these functions equals the zero function, that is, if:
\[
\sum_{i=1}^n a_i \delta_{\sigma_i} = 0 \quad \text{for some integers } a_1, a_2, \dots, a_n,
\]
then it must be that each $a_i = 0$, because $\delta_{\sigma_i}(\sigma_j) = \delta_{ij}$, which implies that each coefficient $a_i$ must be zero for the sum to be zero at each $\sigma_j$.

Thus, the set $\{ \delta_\sigma : \sigma \in \widetilde{K}_p \}$ is a linearly independent set.
\item \textbf{Basis of $C_p(K)$.} We now show that this set of functions forms a basis for $C_p(K)$. To do this, note that every element of $C_p(K)$ is a function $c : \widetilde{K}_p \to \mathbb{Z}$, and any such function can be written uniquely as a finite linear combination of the functions $\delta_\sigma$:
\[
c = \sum_{\sigma \in \widetilde{K}_p} c(\sigma) \delta_\sigma,
\]
where the sum is finite because $c(\sigma) = 0$ for all but finitely many $\sigma$. Hence, every element of $C_p(K)$ can be uniquely expressed as a linear combination of the $\delta_\sigma$'s.
\end{enumerate}
Thus, $C_p(K)$ is a free abelian group with basis $\{ \delta_\sigma : \sigma \in \widetilde{K}_p \}$, the set of oriented $p$-simplices of $K$.
\end{proof}

The elementary chain serves as the building block of the chain group $C_p(K)$. This makes the structure of $C_p(K)$ more transparent and manageable. It allows us to define a canonical basis for the free abelian group $C_p(K)$. Every $p$-chain can be uniquely written as a finite linear combination of elementary chains, each corresponding to an oriented $p$-simplex. Actually, the elementary chains form a standard basis of $C_p(K)$, just like $\mathbf{e}_i$ vectors form a basis of $\mathbb{R}^n$.

\begin{definition}[Elementary Chain]\label{def:elementary-chain}
Let $K$ be a simplicial complex and $\widetilde{K}_p$ be the set of oriented $p$-simplices of $K$. For an oriented $p$-simplex $\sigma \in \widetilde{K}_p$, the \emph{elementary $p$-chain} corresponding to $\sigma$ is the function $c : \widetilde{K}_p \to \mathbb{Z}$ defined by
\[
c(\tau) =
\begin{cases}
1 & \text{if } \tau = \sigma, \\
-1 & \text{if } \tau \text{ is the opposite orientation of } \sigma, \\
0 & \text{otherwise}.
\end{cases}
\]
We denote this elementary chain also by $\sigma$ when there is no ambiguity.
\end{definition}

\begin{example}[Elementary 1-chains and Basis of $C_1(K)$]
Let $K$ be a simplicial complex formed by a triangle with vertices 
$\mathbf{a}_0, \mathbf{a}_1, \mathbf{a}_2$.

The set of $1$-simplices is:
\[
K_1 = \{ \{ \mathbf{a}_0, \mathbf{a}_1 \},\; \{ \mathbf{a}_1, \mathbf{a}_2 \},\; \{ \mathbf{a}_2, \mathbf{a}_0 \} \}.
\]

The set of oriented $1$-simplices is:
\[
\widetilde{K}_1 = \{ 
[\mathbf{a}_0, \mathbf{a}_1],\;
[\mathbf{a}_1, \mathbf{a}_2],\;
[\mathbf{a}_2, \mathbf{a}_0],\;
[\mathbf{a}_1, \mathbf{a}_0],\;
[\mathbf{a}_2, \mathbf{a}_1],\;
[\mathbf{a}_0, \mathbf{a}_2]
\}.
\]

We fix the orientation:
\[
\sigma_1 = [\mathbf{a}_0, \mathbf{a}_1],\quad
\sigma_2 = [\mathbf{a}_1, \mathbf{a}_2],\quad
\sigma_3 = [\mathbf{a}_2, \mathbf{a}_0].
\]

For each $\sigma_i$, the elementary $1$-chain $\delta_{\sigma_i}$ is defined as:
\[
\delta_{\sigma_i}(\tau) = 
\begin{cases}
1 & \text{if } \tau = \sigma_i, \\
-1 & \text{if } \tau = \bar{\sigma}_i, \\
0 & \text{otherwise}.
\end{cases}
\]

Thus, the elementary chains (basis elements) of $C_1(K)$ are:
\begin{align*}
\delta_{\sigma_1} &= 
\begin{cases}
1 & \text{on } [\mathbf{a}_0, \mathbf{a}_1], \\
-1 & \text{on } [\mathbf{a}_1, \mathbf{a}_0], \\
0 & \text{otherwise},
\end{cases} \\
\delta_{\sigma_2} &= 
\begin{cases}
1 & \text{on } [\mathbf{a}_1, \mathbf{a}_2], \\
-1 & \text{on } [\mathbf{a}_2, \mathbf{a}_1], \\
0 & \text{otherwise},
\end{cases} \\
\delta_{\sigma_3} &= 
\begin{cases}
1 & \text{on } [\mathbf{a}_2, \mathbf{a}_0], \\
-1 & \text{on } [\mathbf{a}_0, \mathbf{a}_2], \\
0 & \text{otherwise}.
\end{cases}
\end{align*}

Therefore, the chain group $C_1(K)$ is the free abelian group with basis:
\[
\left\{ \delta_{[\mathbf{a}_0, \mathbf{a}_1]},\; \delta_{[\mathbf{a}_1, \mathbf{a}_2]},\; \delta_{[\mathbf{a}_2, \mathbf{a}_0]} \right\},
\]
and any $1$-chain $c \in C_1(K)$ can be uniquely expressed as:
\[
c = a_1 \delta_{[\mathbf{a}_0, \mathbf{a}_1]} + a_2 \delta_{[\mathbf{a}_1, \mathbf{a}_2]} + a_3 \delta_{[\mathbf{a}_2, \mathbf{a}_0]}, \quad \text{where } a_1, a_2, a_3 \in \mathbb{Z}.
\]
\end{example}

After introducing the elementary chain for a simplex, we can derive more interesting results related to the computation of a basis for the $p$-chain group. The following result helps us determine the basis of the chain group using elementary chains.

\begin{lemma}[Computing Bases of Chain Groups via Elementary Chains]\label{l:Computing Bases of Chain Groups via Elementary Chains}
A basis for $p$-chain group $C_{p}(K)$ can be obtained by orienting each $p$-simplex and using the corresponding elementary chains as a basis.   
\end{lemma}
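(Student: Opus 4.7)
The plan is to show that, after fixing one orientation per $p$-simplex, the corresponding elementary chains form a $\mathbb{Z}$-basis of $C_p(K)$, by verifying spanning and linear independence separately. Theorem \ref{thm:CpK-is-free} already gives that $C_p(K)$ is free abelian on the set $\{\delta_\sigma : \sigma \in \widetilde{K}_p\}$, but this uses \emph{all} oriented $p$-simplices (two per underlying simplex); the present lemma refines this by cutting the generating set in half using the antisymmetry relation $\delta_{\bar\sigma} = -\delta_\sigma$.

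First, I would set up notation. Let $\{\tau_i\}_{i \in I}$ enumerate the (unoriented) $p$-simplices of $K$, and for each $i$ choose one orientation $\sigma_i \in \widetilde{K}_p$; write $\bar\sigma_i$ for its opposite. The claim is that $\mathcal{B} = \{\delta_{\sigma_i} : i \in I\}$ is a basis of $C_p(K)$.

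For the spanning step, take any $c \in C_p(K)$ and set $a_i = c(\sigma_i)$, which is nonzero for only finitely many $i$ by Definition \ref{def:elementary-chain}'s finite-support condition. Consider the candidate representation $c' = \sum_{i} a_i \, \delta_{\sigma_i}$. I would verify $c' = c$ pointwise on $\widetilde{K}_p$: on $\sigma_j$ we get $c'(\sigma_j) = a_j = c(\sigma_j)$ using that $\delta_{\sigma_i}(\sigma_j) = \delta_{ij}$ and $\delta_{\sigma_i}(\sigma_j) = 0$ when $i \ne j$ (since $\sigma_j$ is neither $\sigma_i$ nor $\bar\sigma_i$ for distinct underlying simplices), and on $\bar\sigma_j$ we get $c'(\bar\sigma_j) = -a_j = -c(\sigma_j) = c(\bar\sigma_j)$, where the last equality is precisely the antisymmetry axiom in Definition \ref{d:p chain on Simplicial Complex}. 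This is the conceptual crux: the antisymmetry of a $p$-chain is exactly what lets us halve the generating set.

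For linear independence, I would suppose $\sum_{i} a_i \, \delta_{\sigma_i} = 0$ in $C_p(K)$ and evaluate at each $\sigma_j$; since $\delta_{\sigma_i}(\sigma_j) = \delta_{ij}$, this yields $a_j = 0$ for every $j$. Combining spanning and independence establishes that $\mathcal{B}$ is a basis, so $C_p(K)$ is free abelian on one elementary chain per oriented $p$-simplex. The only place requiring care is the evaluation of $c'$ at $\bar\sigma_j$, where one must explicitly invoke the antisymmetry condition; this is the main (and essentially only) obstacle, and it is more a matter of bookkeeping than of genuine difficulty.
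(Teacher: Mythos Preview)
Your proposal is correct and follows essentially the same approach as the paper: verify that the chosen elementary chains span $C_p(K)$ and are linearly independent, in both cases by pointwise evaluation on oriented simplices. Your treatment is in fact more careful than the paper's, since you explicitly fix one orientation per underlying simplex from the outset and isolate the antisymmetry condition $c(\bar\sigma_j) = -c(\sigma_j)$ as the reason why values on the chosen $\sigma_j$ determine values on all of $\widetilde{K}_p$; the paper's proof leaves this reduction somewhat implicit.
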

\begin{proof}
Let $K$ be a simplicial complex, and let $\widetilde{K}_p$ denote the set of oriented $p$-simplices of $K$. The goal of this proof is to show that the set of elementary chains corresponding to the oriented $p$-simplices forms a basis for the $p$-chain group $C_p(K)$.

\textbf{Step 1: Definition of $p$-chain group $C_p(K)$}

The \textit{$p$-chain group} $C_p(K)$ consists of formal sums of oriented $p$-simplices with integer coefficients. That is, an element $c \in C_p(K)$ can be written as:
\[
c = \sum_{\sigma \in \widetilde{K}_p} a_\sigma \cdot \sigma,
\]
where:
- $\sigma$ is an oriented $p$-simplex from the set $\widetilde{K}_p$,
- $a_\sigma \in \mathbb{Z}$ are integer coefficients.

Thus, $C_p(K)$ is the free abelian group generated by the set of oriented $p$-simplices $\widetilde{K}_p$, and each element of $C_p(K)$ is a formal sum (linear combination) of these oriented simplices.

\textbf{Step 2: Definition of Elementary Chains}

For each oriented $p$-simplex $\sigma \in \widetilde{K}_p$, we define the \textit{elementary chain} $\delta_\sigma$ as a function from the set of all oriented simplices to the integers $\mathbb{Z}$, which is defined as:
\[
\delta_\sigma(\tau) = 
\begin{cases}
1 & \text{if } \tau = \sigma, \\
-1 & \text{if } \tau = \bar{\sigma}, \text{ where } \bar{\sigma} \text{ is the opposite orientation of } \sigma, \\
0 & \text{otherwise}.
\end{cases}
\]
Thus, $\delta_\sigma$ is a function that "picks out" the oriented simplex $\sigma$ and assigns it the coefficient 1, assigns the opposite orientation $\bar{\sigma}$ the coefficient -1, and assigns all other simplices the coefficient 0.

\textbf{Step 3: The Set of Elementary Chains Forms a Free Abelian Group}

The set of elementary chains $\{ \delta_\sigma : \sigma \in \widetilde{K}_p \}$ forms a \textit{free abelian group}. To show this, we must verify the following two conditions:
1. \textit{Linear Independence:}  
   If a linear combination of elementary chains is equal to the zero chain, then the coefficients must all be zero. More formally, if:
   \[
   \sum_{i} a_i \delta_{\sigma_i} = 0,
   \]
   then $a_i = 0$ for all $i$. This is because the elementary chains are defined to be nonzero only on the specific simplices they correspond to, so for the sum to be zero, each coefficient must vanish.
   
2. \textit{Free Group Structure:}  
   There is a unique way to express any $p$-chain as a linear combination of elementary chains. Given that the elementary chains are linearly independent, they generate a free abelian group.

Thus, the set $\{ \delta_\sigma : \sigma \in \widetilde{K}_p \}$ is a \textit{basis} for the free abelian group $C_p(K)$.

\textbf{Step 4: Orientation of $p$-simplices}

Each $p$-simplex $\sigma \in K_p$ is \textit{oriented} in the sense that it is given a direction, which leads to an ordered set of vertices. The orientation distinguishes $\sigma$ from its opposite $\bar{\sigma}$. Since we consider the elementary chain associated with each \textit{oriented} $p$-simplex, the basis elements $\delta_\sigma$ correspond to these oriented simplices.

The fact that the simplices are oriented ensures that each simplex has a distinct representation in the chain group, as $\sigma$ and $\bar{\sigma}$ are treated as different elements in the basis.

\textbf{Step 5: Linear Independence and Spanning}

The set $\{ \delta_\sigma : \sigma \in \widetilde{K}_p \}$ is linearly independent, as shown in Step 3. Furthermore, any $p$-chain $c \in C_p(K)$ can be expressed as a unique linear combination of these elementary chains. Specifically, any $p$-chain $c$ can be written as:
\[
c = \sum_{\sigma \in \widetilde{K}_p} a_\sigma \cdot \delta_\sigma,
\]
where the coefficients $a_\sigma \in \mathbb{Z}$ correspond to the integer coefficients of the simplices in the chain.

Thus, the set $\{ \delta_\sigma : \sigma \in \widetilde{K}_p \}$ \textit{spans} the $p$-chain group $C_p(K)$.

\textbf{Step 6: Conclusion}

Since the set $\{ \delta_\sigma : \sigma \in \widetilde{K}_p \}$ is both linearly independent and spans $C_p(K)$, it forms a basis for the $p$-chain group. Therefore, we have shown that:

A basis for  $C_p(K)$ can be obtained by orienting each $p$-simplex and using the corresponding elementary chains as a basis.
\end{proof}

\begin{remark} \hfill
\begin{enumerate}
\item In $C_0(K)$, the basis elements are vertex-based, while in $C_p(K)$ for $p > 0$, the basis elements are simplicial-based (corresponding to the oriented $p$-simplices).

\item The basis for $C_0(K)$ is more straightforward and deals with the zero-dimensional components (vertices), whereas the basis for $C_p(K)$ captures higher-dimensional geometric objects (simplices) and their orientations.
\end{enumerate}
\end{remark}

The next result \eqref{co:Universal Mapping Property of Chain Group} shows that the chain group $C_p(K)$ has a universal mapping property: Any function from the basis elements (the oriented $p$-simplices) to an abelian group $G$ that respects orientation can be extended uniquely to a group homomorphism from the whole chain group $C_p(K)$ to $G$.
\begin{corollary}[Universal Mapping Property of Chain Group]\label{co:Universal Mapping Property of Chain Group}
Any function $f$ from the oriented $p$-simplices of $K$ to an abelian group $G$ extends uniquely to a homomorphism $\phi \colon C_{p}(K) \to G$, provided that $f(-\sigma) = -f(\sigma)$ for all oriented $p$-simplices $\sigma$. 
\end{corollary}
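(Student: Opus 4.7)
The plan is to exploit the free abelian structure of $C_p(K)$ established in Lemma \ref{l:Computing Bases of Chain Groups via Elementary Chains}. The strategy has three stages: choose a basis by fixing one orientation per simplex, invoke the universal property of free abelian groups to obtain the candidate homomorphism $\phi$ on that basis, and then verify that $\phi$ actually agrees with $f$ on \emph{every} oriented $p$-simplex, including those orientations not chosen as basis representatives.

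For the construction, I would arbitrarily select one orientation of each $p$-simplex of $K$ and label the resulting oriented simplices $\sigma_\alpha$. By Lemma \ref{l:Computing Bases of Chain Groups via Elementary Chains}, the elementary chains $\{\delta_{\sigma_\alpha}\}$ form a basis of the free abelian group $C_p(K)$. I would then define $\phi$ on this basis by $\phi(\delta_{\sigma_\alpha}) = f(\sigma_\alpha)$ and extend linearly: for a chain $c = \sum_\alpha n_\alpha \delta_{\sigma_\alpha}$ with integer coefficients $n_\alpha$ (only finitely many nonzero), set $\phi(c) = \sum_\alpha n_\alpha f(\sigma_\alpha)$. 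Since the sum is finite and $G$ is abelian, this prescription yields a well-defined homomorphism essentially by the universal property of free abelian groups applied to the basis supplied by Theorem \ref{thm:CpK-is-free}.

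The crucial step is verifying that $\phi$ extends $f$ on \emph{both} orientations of each simplex. For an oriented simplex $\sigma$ chosen as a basis representative, $\phi(\sigma) = f(\sigma)$ by construction; for its opposite orientation $-\sigma$, Definition \ref{def:elementary-chain} identifies $-\sigma$ with $-\delta_\sigma$ inside $C_p(K)$, so by linearity $\phi(-\sigma) = -f(\sigma)$, and this equals $f(-\sigma)$ precisely by the stated hypothesis. Uniqueness follows immediately because any homomorphism $\psi$ extending $f$ must agree with $\phi$ on each basis element $\delta_{\sigma_\alpha}$ and hence on all of $C_p(K)$. The main obstacle I anticipate is conceptual rather than computational: articulating clearly why the compatibility condition $f(-\sigma) = -f(\sigma)$ is both necessary (since $-\sigma = -\delta_\sigma$ inside $C_p(K)$ forces the image under any homomorphism) and sufficient (since it is exactly what makes the arbitrary orientation choices used to select the basis irrelevant for the resulting homomorphism).
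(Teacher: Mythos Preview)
Your argument is correct and is precisely the standard route: fix a basis via one orientation per simplex (Lemma~\ref{l:Computing Bases of Chain Groups via Elementary Chains}), extend $f$ by the universal property of free abelian groups, and use the hypothesis $f(-\sigma)=-f(\sigma)$ to check agreement on the opposite orientations. The paper itself provides no proof for this corollary, treating it as an immediate consequence of the preceding results on the basis of $C_p(K)$; your write-up simply makes explicit the reasoning the paper leaves implicit.
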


If we want to define a map $\phi \colon C_p(K) \to \mathbb{Z}$ (e.g., to count something, assign weights, or compute evaluations), this Corollary  \eqref{co:Universal Mapping Property of Chain Group} tells us: Just define $f(\sigma)$ for each oriented simplex $\sigma$ so that $f(-\sigma) = -f(\sigma)$, and we automatically get a well-defined and unique homomorphism $\phi$. For better understanding let us look at an example that describes the universal property of chain groups. 

\begin{example}\label{eq:Universal Mapping Property of Chain Group}
Let the simplicial complex $K$ consist of three vertices and three edges forming a triangle:
\[K = \{\{\mathbf{a}_0\}, \{\mathbf{a}_1\}, \{\mathbf{a}_2\}, \{\mathbf{a}_0, \mathbf{a}_1\}, \{\mathbf{a}_1, \mathbf{a}_2\}, \{\mathbf{a}_2, \mathbf{a}_0\}\}\]
Choose oriented edges:
\[
\sigma_1 = [\mathbf{a}_0, \mathbf{a}_1], \quad \sigma_2 = [\mathbf{a}_1, \mathbf{a}_2], \quad \sigma_3 = [\mathbf{a}_2, \mathbf{a}_0]
\]
Then the set of oriented 1-simplices is $\widetilde{K}_1 = \{ \pm \sigma_1, \pm \sigma_2, \pm \sigma_3 \}$.

The 1-chain group $C_1(K)$ is the free abelian group generated by $\{ \sigma_1, \sigma_2, \sigma_3 \}$, with the relation that $-\sigma = \sigma'$. Define a function $f \colon \widetilde{K}_1 \to \mathbb{Z}$ as follows:
\begin{gather*}
f(\sigma_1) = 2, f(\sigma_2) = -1,
f(\sigma_3) = 3, 
\text{and } f(-\sigma) = -f(\sigma) \text{ for all } \sigma
\end{gather*}
Thus,
\[
f(-\sigma_1) = -2, f(-\sigma_2) = 1,
f(-\sigma_3) = -3
\]
By the Corollary  \eqref{co:Universal Mapping Property of Chain Group}, the function $f$ extends uniquely to a homomorphism $\phi: C_1(K) \to \mathbb{Z}$ defined by linearity. Let $c = n_1 \sigma_1 + n_2 \sigma_2 + n_3 \sigma_3 \in C_1(K)$ be a 1-chain. Then
\[
\phi(c) = n_1 f(\sigma_1) + n_2 f(\sigma_2) + n_3 f(\sigma_3)
\]
That is,
\[
\phi(c) = n_1 \cdot 2 + n_2 \cdot (-1) + n_3 \cdot 3
\]
Particularly, let us consider  $c = 3\sigma_1 - 2\sigma_2 + \sigma_3$, then
\[
\phi(c) = 3 \cdot 2 + (-2) \cdot (-1) + 1 \cdot 3 = 6 + 2 + 3 = 11
\]
This example demonstrates how:
\begin{itemize}
    \item A function $f \colon \widetilde{K}_1 \to \mathbb{Z}$ on oriented $p$-simplices that satisfies $f(-\sigma) = -f(\sigma)$ extends uniquely to a group homomorphism $\phi \colon C_p(K) \to \mathbb{Z}$.
    \item The extension is linear and determined by the values of $f$ on the generators of $C_p(K)$.
\end{itemize}
\end{example}
\section{Boundary Operator}\label{s:Boundary Operator}
The objective of boundary operators in the context of simplicial complexes and chain groups is to algebraically capture how simplices are connected to one another by mapping a $p$-simplex to its boundary, which is a formal sum of its $(p-1)$-dimensional faces. It helps us move from a $p$-chain to a $(p-1)$-chain, reflecting how higher-dimensional simplices are ``built from" lower-dimensional ones. It is a fundamental tool to study topological features like holes and connectedness, which are formalized later through homology groups. 

\begin{definition}[Boundary Operator]
Let $K$ be an oriented simplicial complex. For each $p \geq 1$, the \emph{boundary operator}
\[
\partial_p : C_p(K) \to C_{p-1}(K)
\]
is the group homomorphism defined on an elementary $p$-simplex $\sigma = [\mathbf{a}_0, \mathbf{a}_1, \dots, \mathbf{a}_p]$ by
\[
\partial_p(\sigma) = \sum_{i=0}^{p} (-1)^i [\mathbf{a}_0, \dots, \widehat{\mathbf{a}_i}, \dots, \mathbf{a}_p],
\]
where $\widehat{\mathbf{a}_i}$ indicates that the vertex $\mathbf{a}_i$ is omitted from the sequence. That is, each term in the sum is a $(p-1)$-simplex obtained by deleting exactly one vertex from $\sigma$, with a sign determined by its position.
\end{definition}
\begin{remark}\hfill
\begin{enumerate}
\item This definition extends linearly to all $p$-chains in $C_p(K)$, i.e., for $c = \sum n_j \sigma_j \in C_p(K)$, we define
\[
\partial_p(c) = \sum n_j \, \partial_p(\sigma_j).
\]    
\item Since $C_{p}(K)$ is trivial group for $p < 0$, the operator $\partial_p$ is the trivial homomorphsim for $p \leq 0$. Let us see the complete explanation about this fact.

The chain group $C_p(K)$ is defined for each non-negative integer $p$ as the free abelian group generated by the oriented $p$-simplices of a simplicial complex $K$. This means that for $p = 0$, $C_0(K)$ consists of formal integer linear combinations of the vertices of $K$, and for $p = 1$, $C_1(K)$ consists of formal combinations of edges, and so on. However, when we consider values of $p < 0$, there are no simplices of negative dimension, so we define $C_p(K)$ to be the trivial group (i.e., the group containing only the zero element) for all $p < 0$.

Now, in the context of the boundary operator $\partial_p : C_p(K) \to C_{p-1}(K)$, this operator must be defined for all $p \in \mathbb{Z}$ if we want to describe the full chain complex in algebraic topology. But if $p \leq 0$, then either the domain $C_p(K)$ or the codomain $C_{p-1}(K)$ (or both) is trivial.

\begin{enumerate}
    \item When $p = 0$, the boundary map $\partial_0 : C_0(K) \to C_{-1}(K)$ maps into a trivial group, because $C_{-1}(K) = {0}$. Hence, the map $\partial_0$ sends every element of $C_0(K)$ to $0$, making it the zero homomorphism.
    \item When $p < 0$, both $C_p(K)$ and $C_{p-1}(K)$ are trivial groups. The only possible homomorphism between trivial groups is the trivial (zero) homomorphism.
\end{enumerate}
Therefore, in order to maintain consistency and define the chain complex for all integers $p$, we adopt the convention that $C_p(K)$ is the trivial group for $p < 0$, and hence $\partial_p$ is also the trivial homomorphism in those cases. This simplifies many theoretical constructions and ensures that no unexpected exceptions occur when extending results to all integers.
\end{enumerate}

\end{remark}

\begin{question}
Is it possible to define the boundary operator for $p = 0$. 
\end{question}

\begin{answer}
Yes, it is indeed possible to take $p = 0$ when defining the boundary operator in the context of simplicial homology. In fact, doing so is essential for the completeness of the chain complex. Recall that the boundary operator $\partial_p$ is defined as a homomorphism from the $p$-chain group $C_p(K)$ to the $(p-1)$-chain group $C_{p-1}(K)$, capturing the idea of a boundary of a $p$-simplex as a sum of its $(p-1)$-faces with appropriate orientation. However, when $p = 0$, the objects in $C_0(K)$ are formal linear combinations of $0$-simplices, i.e., the vertices of the simplicial complex. Since a $0$-simplex (a point) has no faces, there is no natural way to define its boundary in terms of lower-dimensional simplices. To handle this situation, we define the boundary operator $\partial_0 \colon C_0(K) \to 0$ as the zero map. That is, for any $0$-chain $c \in C_0(K)$, we set $\partial_0(c) = 0$. We will look at this further in the discussion of chain complexes for more details.
\end{answer}

\begin{theorem}[Boundary Operator as Homomorphism]\label{t:Boundary Operator as Homomorphism}
The boundary operator $\partial_p : C_p(K) \to C_{p-1}(K)$ is a well-defined group homomorphism for each integer $p \geq 1$. Moreover, for any oriented $p$-simplex $\sigma$, the boundary satisfies $\partial_p(-\sigma) = -\partial_p(\sigma)$.
\end{theorem}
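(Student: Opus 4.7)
The plan is to invoke the universal mapping property (Corollary~\ref{co:Universal Mapping Property of Chain Group}) to reduce the theorem to a single structural fact about the alternating-sum formula: when one permutes the vertex list representing an oriented $p$-simplex, the resulting $(p-1)$-chain is left unchanged under even permutations and negated under odd permutations. Once this is known, the formula descends to a well-defined orientation-respecting map $f$ on oriented $p$-simplices with $f(-\sigma) = -f(\sigma)$, and the corollary promotes $f$ to a unique homomorphism $\partial_p : C_p(K) \to C_{p-1}(K)$; the asserted identity $\partial_p(-\sigma) = -\partial_p(\sigma)$ is then exactly the condition guaranteed by the extension.

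By Definition~\ref{d:Even and Odd Permutations of the Vertex Set}, every permutation of $\{0, 1, \ldots, p\}$ is a product of transpositions, and every transposition further decomposes into an odd number of transpositions of adjacent indices. Consequently, it suffices to establish the single identity
\[
\partial_p[\mathbf{a}_0, \ldots, \mathbf{a}_{j+1}, \mathbf{a}_j, \ldots, \mathbf{a}_p] \;=\; -\,\partial_p[\mathbf{a}_0, \ldots, \mathbf{a}_j, \mathbf{a}_{j+1}, \ldots, \mathbf{a}_p]
\]
for each adjacent swap at positions $j$ and $j+1$, $0 \leq j < p$. Inducting on the number of adjacent transpositions in a decomposition then forces even permutations to preserve the sum and odd permutations to negate it, which is what is required.

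The hard part, and the real computational content, is the adjacent-swap identity itself. I would partition the swapped boundary sum according to the omitted index $i$ and compare it term-by-term with the unswapped sum. For each $i \notin \{j, j+1\}$, the omitted vertex is the same on both sides, but inside the surviving $(p-1)$-simplex the two vertices $\mathbf{a}_j$ and $\mathbf{a}_{j+1}$ appear in the swapped order; by the definition of oriented simplex, that single transposition multiplies the face by $-1$. For $i = j$ on the swapped side, the omitted vertex is $\mathbf{a}_{j+1}$ and the sign is $(-1)^j$, while the $i = j+1$ term of the unswapped sum omits the same vertex with sign $(-1)^{j+1}$; these two terms therefore differ by a factor of $-1$. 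Symmetrically, the $i = j+1$ term of the swapped sum and the $i = j$ term of the original boundary differ by $-1$. Collecting all three ranges, every term of the swapped boundary equals $-1$ times a term of the original boundary, which establishes the identity.

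With the adjacent-swap identity in hand, well-definedness of the formula on oriented $p$-simplices and the orientation-reversal relation at the generator level follow at once. Applying Corollary~\ref{co:Universal Mapping Property of Chain Group} with $G = C_{p-1}(K)$ then extends the formula uniquely to a group homomorphism $\partial_p : C_p(K) \to C_{p-1}(K)$, and the identity $\partial_p(-\sigma) = -\partial_p(\sigma)$ transfers to the extension because it already holds on the basis of oriented $p$-simplices and is respected by $\mathbb{Z}$-linearity.
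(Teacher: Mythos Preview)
Your proof is correct and takes a genuinely different route from the paper. The paper's argument essentially \emph{declares} $\partial_p(-\sigma) := -\partial_p(\sigma)$ and then checks additivity directly by expanding $\partial_p(c+c')$ as a formal sum; it never explicitly verifies that two orderings lying in the same even-permutation class yield the same $(p-1)$-chain under the alternating-sum formula. Your approach fills exactly that gap: by reducing to adjacent transpositions and carrying out the term-by-term comparison, you show the formula genuinely descends to oriented simplices, and then you invoke Corollary~\ref{co:Universal Mapping Property of Chain Group}---machinery the paper set up precisely for situations like this---to obtain the homomorphism extension for free. The paper's version is shorter but leans on a tacit well-definedness assumption; yours is longer but actually earns the claim, and it puts the universal property to work rather than re-verifying linearity by hand.
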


\begin{proof}
Let $\sigma = [\mathbf{a}_0, \dots, \mathbf{a}_p]$ be an oriented $p$-simplex in $K$. The boundary operator is defined on $\sigma$ by
\[
\partial_p(\sigma) = \sum_{i=0}^{p} (-1)^i [\mathbf{a}_0, \dots, \widehat{\mathbf{a}_i}, \dots, \mathbf{a}_p],
\]
where $\widehat{\mathbf{a}_i}$ denotes the omission of the $i$-th vertex. Each face is given the induced orientation from $\sigma$.

To verify that $\partial_p$ is well-defined, we must ensure that it respects the orientation of simplices. Suppose $-\sigma$ denotes the same simplex as $\sigma$ with opposite orientation. Then we define
\[
\partial_p(-\sigma) := -\partial_p(\sigma).
\]
This ensures that the operator is consistent with the antisymmetric nature of orientation and thus well-defined on the free abelian group generated by oriented $p$-simplices. Hence,
\[
\partial_p(-\sigma) = - \sum_{i=0}^p (-1)^i [\mathbf{a}_0, \dots, \widehat{\mathbf{a}_i}, \dots, \mathbf{a}_p] = -\partial_p(\sigma).
\]

Now, let $c = \sum_{i=1}^n n_i \sigma_i$ and $c' = \sum_{j=1}^m m_j \tau_j$ be two arbitrary $p$-chains in $C_p(K)$, where $n_i, m_j \in \mathbb{Z}$ and each $\sigma_i$, $\tau_j$ is an oriented $p$-simplex in $K$. Then,
\[
c + c' = \sum_{i=1}^n n_i \sigma_i + \sum_{j=1}^m m_j \tau_j.
\]
Applying the boundary operator linearly:
\[
\partial_p(c + c') = \partial_p\left( \sum_{i=1}^n n_i \sigma_i + \sum_{j=1}^m m_j \tau_j \right)
= \sum_{i=1}^n n_i \partial_p(\sigma_i) + \sum_{j=1}^m m_j \partial_p(\tau_j)
= \partial_p(c) + \partial_p(c').
\]
Thus, $\partial_p$ preserves the group operation (addition) in $C_p(K)$. Hence, $\partial_p$ is a well-defined group homomorphism, and it satisfies $\partial_p(-\sigma) = -\partial_p(\sigma)$.
\end{proof}

\begin{example}[Computation of Boundary Operator for 1-simplex]\label{eq:Computation of Boundary Operator for 1 simplex}
Let $\sigma = [\mathbf{a}_0, \mathbf{a}_1]$ be an oriented $1$-simplex in a simplicial complex $K$. Compute the boundary $\partial_1(\sigma)$.

The $1$-simplex $\sigma = [\mathbf{a}_0, \mathbf{a}_1]$ represents an oriented edge from vertex $\mathbf{a}_0$ to vertex $\mathbf{a}_1$. According to the definition of the boundary operator for $p = 1$ i.e. $\partial_1 \colon C_{1}(K) \to C_{0}(K)$:
\[
\partial_1([\mathbf{a}_0, \mathbf{a}_1]) = (-1)^0 [\mathbf{a}_1] + (-1)^1 [\mathbf{a}_0] = [\mathbf{a}_1] - [\mathbf{a}_0].
\]
Thus, the boundary of the $1$-simplex is the formal difference of its two endpoints:
\[
\partial_1([\mathbf{a}_0, \mathbf{a}_1]) = [\mathbf{a}_1] - [\mathbf{a}_0].
\]
This expression lies in the chain group $C_0(K)$, which is generated by the $0$-simplices (i.e., the vertices of $K$). It reflects that the ``boundary of an oriented edge is the difference between its terminal vertex and its initial vertex".

Note that changing the orientation of the $1$-simplex reverses the sign:
\[
\partial_1([\mathbf{a}_1, \mathbf{a}_0]) = [\mathbf{a}_0] - [\mathbf{a}_1] = -\partial_1([\mathbf{a}_0, \mathbf{a}_1]).
\]
This illustrates the anti-symmetry property of the boundary operator with respect to orientation, i.e., $\partial_p(-\sigma) = -\partial_p(\sigma)$.    
\end{example}

\begin{example}[Computation of Boundary Operator for 2-simplex]\label{eq:Computation of Boundary Operator for 2 simplex}
Let $\sigma = [\mathbf{a}_0, \mathbf{a}_1, \mathbf{a}_2]$ be an oriented $2$-simplex in a simplicial complex $K$. Compute the boundary $\partial_2(\sigma)$.

We are given a $2$-simplex $\sigma = [\mathbf{a}_0, \mathbf{a}_1, \mathbf{a}_2]$, which geometrically represents an oriented triangle with vertices $\mathbf{a}_0$, $\mathbf{a}_1$, and $\mathbf{a}_2$, in that cyclic order. By the definition of the boundary operator for $p = 2$ i.e. $\partial_2 \colon C_{2}(K) \to C_{1}(K)$:
\[
\partial_2([\mathbf{a}_0, \mathbf{a}_1, \mathbf{a}_2]) = 
(-1)^0 [\mathbf{a}_1, \mathbf{a}_2] 
+ (-1)^1 [\mathbf{a}_0, \mathbf{a}_2] 
+ (-1)^2 [\mathbf{a}_0, \mathbf{a}_1].
\]
That is,
\[
\partial_2([\mathbf{a}_0, \mathbf{a}_1, \mathbf{a}_2]) = 
[\mathbf{a}_1, \mathbf{a}_2] 
- [\mathbf{a}_0, \mathbf{a}_2] 
+ [\mathbf{a}_0, \mathbf{a}_1].
\]
This result lies in the chain group $C_1(K)$, which is generated by the oriented $1$-simplices (edges). Each term in the sum is a face of the triangle, with the orientation ``induced" from the ordering of the triangle’s vertices. Where 
\begin{itemize}
\item $[\mathbf{a}_1, \mathbf{a}_2]$ is the oriented edge opposite to vertex $\mathbf{a}_0$ (obtained by omitting $\mathbf{a}_0$),
\item $-[\mathbf{a}_0, \mathbf{a}_2]$ is the oriented edge opposite to vertex $\mathbf{a}_1$ (obtained by omitting $\mathbf{a}_1$),
\item $[\mathbf{a}_0, \mathbf{a}_1]$ is the oriented edge opposite to vertex $\mathbf{a}_2$ (obtained by omitting $\mathbf{a}_2$).
\end{itemize}
These orientations are consistent with the induced orientations from the $2$-simplex. So, the boundary of the oriented triangle is the formal sum:
\[
\partial_2([\mathbf{a}_0, \mathbf{a}_1, \mathbf{a}_2]) = 
[\mathbf{a}_1, \mathbf{a}_2] 
- [\mathbf{a}_0, \mathbf{a}_2] 
+ [\mathbf{a}_0, \mathbf{a}_1],
\]
which algebraically encodes the boundary path around the triangle, traversing its edges in an oriented fashion.

If we were to reverse the orientation of the 2-simplex, we would get:
\[
\partial_2([\mathbf{a}_0, \mathbf{a}_2, \mathbf{a}_1]) = 
[\mathbf{a}_2, \mathbf{a}_1] 
- [\mathbf{a}_0, \mathbf{a}_1] 
+ [\mathbf{a}_0, \mathbf{a}_2] = -\partial_2([\mathbf{a}_0, \mathbf{a}_1, \mathbf{a}_2]).
\]
Thus, the boundary operator respects orientation, and $\partial_2(-\sigma) = -\partial_2(\sigma)$.
\end{example}

\begin{example}[Computation of Boundary Operator for 3-simplex]\label{eq:Computation of Boundary Operator for 3 simplex}
Let $\sigma = [\mathbf{a}_0, \mathbf{a}_1, \mathbf{a}_2, \mathbf{a}_3]$ be a $3$-simplex with the ordered vertices $\mathbf{a}_0, \mathbf{a}_1, \mathbf{a}_2, \mathbf{a}_3$. We compute the boundary $\partial_3(\sigma)$ using the definition of the boundary operator.

The boundary operator $\partial_p \colon C_{p}(K) \to C_{p- 1}(K)$ for a $p$-simplex $[\mathbf{a}_0, \dots, \mathbf{a}_p]$ is given by:
\[
\partial_p([\mathbf{a}_0, \dots, \mathbf{a}_p]) = \sum_{i=0}^{p} (-1)^i [\mathbf{a}_0, \dots, \widehat{\mathbf{a}_i}, \dots, \mathbf{a}_p],
\]
where $\widehat{\mathbf{a}_i}$ means that the vertex $\mathbf{a}_i$ is omitted.

For $p = 3$ the boundary operator $\partial_3 \colon C_{3}(K) \to C_{2}(K)$, we have:
\begin{align*}
\partial_3([\mathbf{a}_0, \mathbf{a}_1, \mathbf{a}_2, \mathbf{a}_3]) &= 
(-1)^0 [\mathbf{a}_1, \mathbf{a}_2, \mathbf{a}_3] +
(-1)^1 [\mathbf{a}_0, \mathbf{a}_2, \mathbf{a}_3] \\
&\quad + (-1)^2 [\mathbf{a}_0, \mathbf{a}_1, \mathbf{a}_3] +
(-1)^3 [\mathbf{a}_0, \mathbf{a}_1, \mathbf{a}_2] \\
&= [\mathbf{a}_1, \mathbf{a}_2, \mathbf{a}_3]
- [\mathbf{a}_0, \mathbf{a}_2, \mathbf{a}_3]
+ [\mathbf{a}_0, \mathbf{a}_1, \mathbf{a}_3]
- [\mathbf{a}_0, \mathbf{a}_1, \mathbf{a}_2]
\end{align*}

Each term in the sum corresponds to one of the triangular faces of the tetrahedron:
\begin{itemize}
    \item $[\mathbf{a}_1, \mathbf{a}_2, \mathbf{a}_3]$ is the face opposite $\mathbf{a}_0$,
    \item $-[\mathbf{a}_0, \mathbf{a}_2, \mathbf{a}_3]$ is the face opposite $\mathbf{a}_1$,
    \item $[\mathbf{a}_0, \mathbf{a}_1, \mathbf{a}_3]$ is the face opposite $\mathbf{a}_2$,
    \item $-[\mathbf{a}_0, \mathbf{a}_1, \mathbf{a}_2]$ is the face opposite $\mathbf{a}_3$.
\end{itemize}
The boundary operator algebraically encodes how each higher-dimensional simplex can be decomposed into lower-dimensional faces. For example, the boundary of the $3$-simplex $\sigma = [\mathbf{a}_0, \mathbf{a}_1, \mathbf{a}_2, \mathbf{a}_3]$ is the sum of the oriented $2$-faces (triangles) that form its boundary, with appropriate signs reflecting their orientation relative to the original simplex. This operation reflects the underlying structure of the simplicial complex, where each simplex is part of a boundary of a higher-dimensional simplex.

If we were to reverse the orientation of the $3$-simplex $\sigma = [\mathbf{a}_0, \mathbf{a}_1, \mathbf{a}_2, \mathbf{a}_3]$, we would multiply the simplex by $-1$, and the new oriented simplex would be $-[\mathbf{a}_0, \mathbf{a}_1, \mathbf{a}_2, \mathbf{a}_3]$. For the boundary operator, we would have:
\[
\partial_3([\mathbf{a}_0, \mathbf{a}_1, \mathbf{a}_2, \mathbf{a}_3]) = [\mathbf{a}_1, \mathbf{a}_2, \mathbf{a}_3] - [\mathbf{a}_0, \mathbf{a}_2, \mathbf{a}_3] + [\mathbf{a}_0, \mathbf{a}_1, \mathbf{a}_3] - [\mathbf{a}_0, \mathbf{a}_1, \mathbf{a}_2],
\]
and for the reversed orientation:
\[
\partial_3(-[\mathbf{a}_0, \mathbf{a}_1, \mathbf{a}_2, \mathbf{a}_3]) = -[\mathbf{a}_1, \mathbf{a}_2, \mathbf{a}_3] + [\mathbf{a}_0, \mathbf{a}_2, \mathbf{a}_3] - [\mathbf{a}_0, \mathbf{a}_1, \mathbf{a}_3] + [\mathbf{a}_0, \mathbf{a}_1, \mathbf{a}_2].
\]
Thus, reversing the orientation of the $3$-simplex changes the signs of the boundary operator terms.

Hence, the boundary of the $3$-simplex is a $2$-chain consisting of all the oriented faces with appropriate signs determined by the orientation.
\end{example}

The objective of this lemma is to show that if we apply two successive boundary operators, one after the other, on a simplex or a chain, the result is always zero. In simpler terms, the boundary of a boundary vanishes. This is fundamental to the construction of chain complexes, where it ensures that the boundary maps are consistent and well-defined.

\begin{lemma}[Boundary of a Boundary is Zero]\label{l:Boundary of a Boundary is Zero}
Let $K$ be a simplicial complex and $\partial_p \colon C_p(K) \to C_{p-1}(K)$ be the boundary operator on $p$-chains. Then, for all integers $p \geq 1$, we have
\[
\partial_{p-1} \circ \partial_p = 0.
\]
That is, for any $p$-chain $c \in C_p(K)$,
\[
\partial_{p-1}(\partial_p(c)) = 0.
\]
\end{lemma}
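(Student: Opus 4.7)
The plan is to exploit the fact that $\partial_p$ is a group homomorphism (established in Theorem~\ref{t:Boundary Operator as Homomorphism}), so it suffices to verify the identity on a single elementary oriented $p$-simplex $\sigma=[\mathbf{a}_0,\ldots,\mathbf{a}_p]$; the general statement for chains $c=\sum n_j\sigma_j$ then follows by linearity. Once this reduction is in place, the strategy is to apply the alternating-sum formula twice and argue that the resulting double sum of $(p-2)$-simplices cancels in pairs.

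Concretely, I would first write
\[
\partial_p(\sigma)=\sum_{i=0}^{p}(-1)^i[\mathbf{a}_0,\ldots,\widehat{\mathbf{a}_i},\ldots,\mathbf{a}_p]
\]
and then apply $\partial_{p-1}$ termwise. Every $(p-2)$-simplex appearing in the expansion is obtained from $\sigma$ by deleting an unordered pair of vertices $\{\mathbf{a}_i,\mathbf{a}_j\}$ with $i\ne j$, and each such pair arises in the double sum from exactly two routes: delete $\mathbf{a}_i$ first, then $\mathbf{a}_j$; or delete $\mathbf{a}_j$ first, then $\mathbf{a}_i$. The crux is a careful sign count for these two contributions. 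Assuming $i<j$, after removing $\mathbf{a}_i$ the vertex $\mathbf{a}_j$ occupies position $j-1$ in the shortened list, so the second application of the boundary contributes a sign $(-1)^{j-1}$, giving a total sign $(-1)^{i+j-1}$. By contrast, removing $\mathbf{a}_j$ first does not shift the index of $\mathbf{a}_i$, so that route carries a sign $(-1)^{i+j}$. The two signs are opposite, and the two copies of $[\mathbf{a}_0,\ldots,\widehat{\mathbf{a}_i},\ldots,\widehat{\mathbf{a}_j},\ldots,\mathbf{a}_p]$ annihilate each other.

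The main obstacle is precisely this sign bookkeeping: it is easy to mis-track how the omission of an earlier vertex re-indexes the remaining positions, and getting the exponent $j-1$ versus $j$ right in the two cases is essentially the entire content of the argument. To keep this transparent, I would organize the computation by writing $\partial_{p-1}\partial_p(\sigma)$ as a double sum over pairs $(i,j)$, split it into the sub-sums with $j<i$ and $j>i$ (or equivalently, after shifting the inner index, into $j<i$ and $j\ge i$), and then re-label so that in each surviving term the smaller omitted index appears first. Once the two contributions associated to each unordered pair $\{i,j\}$ are paired up and the signs are shown to be opposite, the cancellation is visible term-by-term and the conclusion $\partial_{p-1}\bigl(\partial_p(\sigma)\bigr)=0$ is immediate. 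Extending from $\sigma$ to an arbitrary $p$-chain via the homomorphism property then completes the proof.
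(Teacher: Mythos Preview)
Your proposal is correct and follows essentially the same route as the paper: reduce by linearity to a single oriented $p$-simplex, expand $\partial_{p-1}\partial_p(\sigma)$ as a double sum over pairs of omitted vertices, and verify that for each unordered pair $\{i,j\}$ with $i<j$ the two contributions carry signs $(-1)^{i+j-1}$ and $(-1)^{i+j}$, hence cancel. The paper's proof is organized identically, with the same sign identity $(-1)^i(-1)^{j-1}+(-1)^j(-1)^i=0$ as the key step.
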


\begin{proof}
Since the boundary operator is linear, it suffices to prove the result for an elementary $p$-chain, that is, for a single oriented $p$-simplex 
\[
\sigma = [\mathbf{a}_0, \mathbf{a}_1, \dots, \mathbf{a}_p].
\]
By the definition of the boundary operator, we have:
\[
\partial_p(\sigma) = \sum_{i=0}^{p} (-1)^i [\mathbf{a}_0, \dots, \widehat{\mathbf{a}_i}, \dots, \mathbf{a}_p],
\]
where $\widehat{\mathbf{a}_i}$ denotes that the vertex $\mathbf{a}_i$ is omitted.

Now, we apply the boundary operator $\partial_{p-1}$ to each $(p-1)$-simplex in the sum:
\[
\partial_{p-1}(\partial_p(\sigma)) = \sum_{i=0}^{p} (-1)^i \partial_{p-1}([\mathbf{a}_0, \dots, \widehat{\mathbf{a}_i}, \dots, \mathbf{a}_p]).
\]
Now, fix $i$ and consider the term 
\[
\partial_{p-1}([\mathbf{a}_0, \dots, \widehat{\mathbf{a}_i}, \dots, \mathbf{a}_p]) = \sum_{j=0}^{p-1} (-1)^j [\mathbf{a}_0, \dots, \widehat{\mathbf{a}_i}, \dots, \widehat{\mathbf{a}_j}, \dots, \mathbf{a}_p].
\]
Each term in this double sum corresponds to a $(p-2)$-simplex obtained by omitting two distinct vertices from $\sigma$. We now show that each such $(p-2)$-simplex appears twice in the total sum, but with opposite signs.

Let $\tau = [\mathbf{a}_0, \dots, \widehat{\mathbf{a}_i}, \dots, \widehat{\mathbf{a}_j}, \dots, \mathbf{a}_p]$ with $i < j$. Then, $\tau$ appears:
\begin{itemize}
\item Once in the expansion of $\partial_{p-1}$ applied to the term where $\mathbf{a}_i$ was omitted first, and then $\mathbf{a}_j$ (adjusted for indexing after $\mathbf{a}_i$ is removed).
\item Once in the expansion where $\mathbf{a}_j$ is omitted first, and then $\mathbf{a}_i$.
\end{itemize}
These two terms have opposite signs due to the alternating signs in the boundary formula:
\[
(-1)^i (-1)^{j-1} + (-1)^j (-1)^i = (-1)^{i + j - 1} + (-1)^{i + j} = 0.
\]
Hence, every $(p-2)$-simplex appears twice with opposite signs, and so the total sum is zero. Thus,
\[
\partial_{p-1}(\partial_p(\sigma)) = 0.
\]
By linearity, it follows that for any $c \in C_p(K)$,
\[
\partial_{p-1}(\partial_p(c)) = 0.
\]
\end{proof}

\begin{example}[Verification of $\partial_2 \circ \partial_3 = 0$ for a 3-simplex]\label{eq:Verification of Lemma for successive boundary operators}
Let $\sigma = [\mathbf{a}_0, \mathbf{a}_1, \mathbf{a}_2, \mathbf{a}_3]$ be an oriented 3-simplex. We first compute the boundary:
\[
\partial_3(\sigma) = 
[\mathbf{a}_1, \mathbf{a}_2, \mathbf{a}_3] 
- [\mathbf{a}_0, \mathbf{a}_2, \mathbf{a}_3] 
+ [\mathbf{a}_0, \mathbf{a}_1, \mathbf{a}_3] 
- [\mathbf{a}_0, \mathbf{a}_1, \mathbf{a}_2].
\]
Now compute $\partial_2(\partial_3(\sigma))$:
\begin{align*}
\partial_2(\partial_3(\sigma)) 
&= \partial_2([\mathbf{a}_1, \mathbf{a}_2, \mathbf{a}_3]) 
- \partial_2([\mathbf{a}_0, \mathbf{a}_2, \mathbf{a}_3]) \\
&\quad + \partial_2([\mathbf{a}_0, \mathbf{a}_1, \mathbf{a}_3]) 
- \partial_2([\mathbf{a}_0, \mathbf{a}_1, \mathbf{a}_2]) \\
&= \big( [\mathbf{a}_2, \mathbf{a}_3] - [\mathbf{a}_1, \mathbf{a}_3] + [\mathbf{a}_1, \mathbf{a}_2] \big) \\
&\quad - \big( [\mathbf{a}_2, \mathbf{a}_3] - [\mathbf{a}_0, \mathbf{a}_3] + [\mathbf{a}_0, \mathbf{a}_2] \big) \\
&\quad + \big( [\mathbf{a}_1, \mathbf{a}_3] - [\mathbf{a}_0, \mathbf{a}_3] + [\mathbf{a}_0, \mathbf{a}_1] \big) \\
&\quad - \big( [\mathbf{a}_1, \mathbf{a}_2] - [\mathbf{a}_0, \mathbf{a}_2] + [\mathbf{a}_0, \mathbf{a}_1] \big).
\end{align*}
Now combine and cancel like terms:
\begin{align*}
[\mathbf{a}_2, \mathbf{a}_3] - [\mathbf{a}_2, \mathbf{a}_3] &= 0, \\
-[\mathbf{a}_1, \mathbf{a}_3] + [\mathbf{a}_1, \mathbf{a}_3] &= 0, \\
[\mathbf{a}_1, \mathbf{a}_2] - [\mathbf{a}_1, \mathbf{a}_2] &= 0, \\
[\mathbf{a}_0, \mathbf{a}_3] - [\mathbf{a}_0, \mathbf{a}_3] &= 0, \\
[\mathbf{a}_0, \mathbf{a}_2] - [\mathbf{a}_0, \mathbf{a}_2] &= 0, \\
[\mathbf{a}_0, \mathbf{a}_1] - [\mathbf{a}_0, \mathbf{a}_1] &= 0.
\end{align*}
Hence, $\partial_2(\partial_3(\sigma)) = 0$.
\end{example}

\section{Homology Groups}\label{s:Homology Groups}

\begin{definition}[$p$-cycles]\label{d:p cycles}
Let $K$ be a simplicial complex and let $C_p(K)$ denote the group of $p$-chains, which are formal sums of oriented $p$-simplices with integer coefficients (or coefficients from any abelian group). A $p$-cycle is a $p$-chain whose boundary is zero. That is, a $p$-chain $c \in C_p(K)$ is called a $p$-cycle if:
\[
\partial_p(c) = 0
\]
Here, $\partial_p \colon C_p(K) \to C_{p-1}(K)$ is the boundary operator, which maps a $p$-chain to its boundary, a $(p-1)$-chain.
\end{definition}

\begin{observation}[Geometric Intuition of $p$-cycles]
Geometrically, a $p$-cycle can be thought of as a closed $p$-dimensional structure made from $p$-simplices. For example:
\begin{enumerate}
\item A 1-cycle is a collection of edges (1-simplices) forming closed loops, like the boundary of a triangle or more complex loops.
\item A 2-cycle is a collection of 2-simplices (triangles) that together form a closed surface, like the surface of a tetrahedron (without its interior).
\end{enumerate}
In simple terms, $p$-cycles are ``loop-like" $p$-dimensional objects with no boundary.
\end{observation}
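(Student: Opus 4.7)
The plan is to substantiate the geometric claim by verifying each concrete example computationally and then connecting the algebraic identity $\partial_p(c)=0$ to the informal idea of a ``closed'' $p$-dimensional figure. Since the observation is interpretive rather than a standalone theorem, I would present the justification as a verification of the two prototypical cases followed by a short structural argument explaining why every example fits the same pattern.

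First, for the $1$-cycle claim, I would take the three oriented edges $\sigma_1=[\mathbf{a}_0,\mathbf{a}_1]$, $\sigma_2=[\mathbf{a}_1,\mathbf{a}_2]$, $\sigma_3=[\mathbf{a}_2,\mathbf{a}_0]$ forming the oriented boundary of a triangle and compute $\partial_1(\sigma_1+\sigma_2+\sigma_3)$ directly via the formula from Example \ref{eq:Computation of Boundary Operator for 1 simplex}. Each vertex $\mathbf{a}_i$ appears once as the head of one edge and once as the tail of another, so the contributions cancel in pairs and the sum collapses to $0$, confirming that the closed triangular loop is a $1$-cycle.

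Next, for the $2$-cycle claim, I would consider the four oriented triangular faces of a tetrahedron taken with the signs appearing in the boundary of $[\mathbf{a}_0,\mathbf{a}_1,\mathbf{a}_2,\mathbf{a}_3]$, namely
\[
[\mathbf{a}_1,\mathbf{a}_2,\mathbf{a}_3]-[\mathbf{a}_0,\mathbf{a}_2,\mathbf{a}_3]+[\mathbf{a}_0,\mathbf{a}_1,\mathbf{a}_3]-[\mathbf{a}_0,\mathbf{a}_1,\mathbf{a}_2].
\]
Rather than redo the cancellation by hand, I would invoke Lemma \ref{l:Boundary of a Boundary is Zero}: this $2$-chain is $\partial_3$ applied to a $3$-simplex, so applying $\partial_2$ automatically yields $0$, and hence the tetrahedral surface is a $2$-cycle. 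Example \ref{eq:Verification of Lemma for successive boundary operators} carries out the detailed arithmetic, so I would simply cite it.

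For the general geometric interpretation, I would argue that the condition $\partial_p(c)=0$ forces every $(p-1)$-face appearing in $\partial_p(c)$ to occur with total coefficient zero; equivalently, each lower-dimensional face shared by two $p$-simplices in $c$ receives opposing contributions from their induced orientations, so no face of $c$ is left ``exposed.'' This is precisely the combinatorial counterpart of the geometric notion of having ``no free boundary,'' which matches the informal phrase ``loop-like $p$-dimensional object with no boundary.'' The only real subtlety, and therefore the main obstacle, is the sign bookkeeping needed to see cancellation on shared faces transparently; this is cleanly handled by the anti-symmetry $\partial_p(-\sigma)=-\partial_p(\sigma)$ established in Theorem \ref{t:Boundary Operator as Homomorphism}, which lets us treat the opposite orientation of a face as its additive inverse in $C_{p-1}(K)$ and thereby makes the pairwise cancellation rigorous.
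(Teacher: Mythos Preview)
Your proposal is correct. In the paper this Observation is stated as unproved geometric intuition rather than a theorem; the justification appears only implicitly, through the worked Examples~\ref{eq:1 Cycle} and~\ref{eq:2 Cycle} that immediately follow, which carry out exactly the two verifications you describe (the triangle loop and the tetrahedral surface) by direct boundary computation.

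The one small methodological difference is that for the tetrahedron the paper computes $\partial_2$ of the four-face chain explicitly and cancels terms by hand, whereas you shortcut this by recognizing the chain as $\partial_3$ of a $3$-simplex and invoking Lemma~\ref{l:Boundary of a Boundary is Zero} (with Example~\ref{eq:Verification of Lemma for successive boundary operators} supplying the arithmetic). Your route is slightly slicker and makes the structural reason for closure more visible; the paper's direct computation is more elementary and self-contained. Your added paragraph explaining why $\partial_p(c)=0$ means ``no exposed faces'' goes beyond anything the paper states explicitly and is a useful gloss on the intuition.
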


\begin{definition}[Cycle Group of $p$-cycles]\label{d:Cycle Group of p cycles}
The set of all $p$-cycles in simplicial complex $K$ forms a subgroup of chain group  $C_p(K)$, denoted by:
\[
Z_p(K) = \ker(\partial_p) = \{ c \in C_p(K) \mid \partial_p(c) = 0 \}
\]
This group is called the cycle group of dimension $p$.
\end{definition}

\begin{remark}
By the standard definition of kernel in Group theory, the cycle group $Z_p(K)$ of $p$-cycles is actually the kernel of the boundary operator $\partial \colon C_{p}(K) \to C_{p - 1}(K)$. 
\end{remark}

\begin{example}[1-Cycle]\label{eq:1 Cycle}
Let $K$ be a simplicial complex (tetrahedron) with vertex (oriented ) set $V = \{\btext{a}_0, \btext{a}_1, \btext{a}_2\}$. The 1-simplices (oriented edges) of the traingle are:
\[
\sigma_1 = [\btext{a}_0, \btext{a}_1], \sigma_2 = [\btext{a}_1, \btext{a}_2], \sigma_3 = [\btext{a}_2, \btext{a}_0]
\]
Then the chain $c = \sigma_1 + \sigma_2 + \sigma_3$ is a 1-cycle because:
\begin{align*}
\partial_1(c) & = \partial_1(\sigma_1) + \partial_1(\sigma_2) + \partial_1(\sigma_3) \\
& = [\btext{a}_0] - [\btext{a}_1] + [\btext{a}_1] - [\btext{a}_2] + [\btext{a}_2] - [\btext{a}_0]  \\
& = 0
\end{align*}
Since $\partial_1(c) = 0$, the chain
\[
z = [\btext{a}_0, \btext{a}_1] + [\btext{a}_1, \btext{a}_2] + [\btext{a}_2, \btext{a}_0] 
\]
is a 1-cycle. That is, $c \in Z_2(K)$.
\end{example}

\begin{example}[2-Cycle]\label{eq:2 Cycle}
Let $K$ be a simplicial complex (tetrahedron) with vertex (oriented ) set $V = \{\btext{a}_0, \btext{a}_1, \btext{a}_2, \btext{a}_3\}$. The 2-simplices (oriented triangular faces) of the tetrahedron are:
\[\sigma_1 = [\btext{a}_0, \btext{a}_1, \btext{a}_2],
\sigma_2 = [\btext{a}_0, \btext{a}_1, \btext{a}_3],
\sigma_3 = [\btext{a}_0, \btext{a}_2, \btext{a}_3],
\sigma_4 = [\btext{a}_1, \btext{a}_2, \btext{a}_3]\]
Let us define a 2-chain:
\[
c = \sigma_1 - \sigma_2 + \sigma_3 - \sigma_4
\]
Recall that the boundary of a 2-simplex $[\btext{a}_i, \btext{a}_j, \btext{a}_k]$ is given by:
\[
\partial_2([\btext{a}_i, \btext{a}_j, \btext{a}_k]) = [\btext{a}_j, \btext{a}_k] - [\btext{a}_i, \btext{a}_k] + [\btext{a}_i, \btext{a}_j]
\]
We compute the boundary of $z$:
\begin{align*}
\partial_2(c) &= \partial_2(\sigma_1) - \partial_2(\sigma_2) + \partial_2(\sigma_3) - \partial_2(\sigma_4) \\
&= \left( [\btext{a}_1, \btext{a}_2] - [\btext{a}_0, \btext{a}_2] + [\btext{a}_0, \btext{a}_1] \right) \\
&\quad - \left( [\btext{a}_1, \btext{a}_3] - [\btext{a}_0, \btext{a}_3] + [\btext{a}_0, \btext{a}_1] \right) \\
&\quad + \left( [\btext{a}_2, \btext{a}_3] - [\btext{a}_0, \btext{a}_3] + [\btext{a}_0, \btext{a}_2] \right) \\
&\quad - \left( [\btext{a}_2, \btext{a}_3] - [\btext{a}_1, \btext{a}_3] + [\btext{a}_1, \btext{a}_2] \right)
\end{align*}
Now combine terms:
\begin{align*}
\partial_2(c) &= [\btext{a}_1, \btext{a}_2] - [\btext{a}_0, \btext{a}_2] + [\btext{a}_0, \btext{a}_1] \\
&\quad - [\btext{a}_1, \btext{a}_3] + [\btext{a}_0, \btext{a}_3] - [\btext{a}_0, \btext{a}_1] \\
&\quad + [\btext{a}_2, \btext{a}_3] - [\btext{a}_0, \btext{a}_3] + [\btext{a}_0, \btext{a}_2] \\
&\quad - [\btext{a}_2, \btext{a}_3] + [\btext{a}_1, \btext{a}_3] - [\btext{a}_1, \btext{a}_2]
\end{align*}
Simplify:
\begin{align*}
\partial_2(c) &= \left( [\btext{a}_1, \btext{a}_2] - [\btext{a}_1, \btext{a}_2] \right)
+ \left( -[\btext{a}_0, \btext{a}_2] + [\btext{a}_0, \btext{a}_2] \right)
+ \left( [\btext{a}_0, \btext{a}_1] - [\btext{a}_0, \btext{a}_1] \right) \\
&\quad + \left( -[\btext{a}_1, \btext{a}_3] + [\btext{a}_1, \btext{a}_3] \right)
+ \left( [\btext{a}_0, \btext{a}_3] - [\btext{a}_0, \btext{a}_3] \right)
+ \left( [\btext{a}_2, \btext{a}_3] - [\btext{a}_2, \btext{a}_3] \right) \\
&= 0
\end{align*}
Since $\partial_2(c) = 0$, the chain
\[
z = [\btext{a}_0, \btext{a}_1, \btext{a}_2] - [\btext{a}_0, \btext{a}_1, \btext{a}_3] + [\btext{a}_0, \btext{a}_2, \btext{a}_3] - [\btext{a}_1, \btext{a}_2, \btext{a}_3]
\]
is a 2-cycle. That is, $c \in Z_2(K)$.
\end{example}

\begin{definition}[Group of $p$-Boundaries]\label{d:Group of p Boundaries}
Let $K$ be a simplicial complex. The group of $p$-boundaries, denoted by $B_p(K)$, is defined as the image of the boundary operator $\partial_{p+1} \colon C_{p+1}(K) \to C_p(K)$. That is, 
\[B_p(K) = \mrm{im}(\partial_{P + 1}) = \left\{ \partial_{p+1}(c) \mid c \in C_{p+1}(K) \right\}
\]
We can say that $B_p(K)$ consists of all $p$-chains that are boundaries of some $(p+1)$-chain.
\end{definition}

\begin{example}[1-Boundary on a Triangle]
Let $K$ be a simplicial complex representing a filled triangle (a 2-simplex) with ordered vertex set $V = \{ \bm{a}_0, \bm{a}_1, \bm{a}_2 \}$. The 1-simplices (oriented edges) of $K$ are given by
\[
\sigma_1 = [\bm{a}_0, \bm{a}_1], \quad
\sigma_2 = [\bm{a}_1, \bm{a}_2], \quad
\sigma_3 = [\bm{a}_2, \bm{a}_0].
\]
Let $\tau = [\bm{a}_0, \bm{a}_1, \bm{a}_2]$ be the oriented 2-simplex (triangle) formed by these vertices. The boundary of $\tau$ under the boundary homomorphism $\partial_2 \colon C_2(K) \to C_1(K)$ is computed using the standard alternating sum
\[
\partial_2(\tau) = [\bm{a}_1, \bm{a}_2] - [\bm{a}_0, \bm{a}_2] + [\bm{a}_0, \bm{a}_1].
\]
Note that we orient each face consistently with the orientation of $\tau$. Observe that
\[
[\bm{a}_0, \bm{a}_2] = -[\bm{a}_2, \bm{a}_0],
\]
so we rewrite the expression using the given edge notation:
\[
\partial_2(\tau) = \sigma_2 - (-\sigma_3) + \sigma_1 = \sigma_2 + \sigma_3 + \sigma_1.
\]
Hence, the 1-boundary corresponding to the 2-simplex $\tau = [\bm{a}_0, \bm{a}_1, \bm{a}_2]$ is 
\[\partial_2([\bm{a}_0, \bm{a}_1, \bm{a}_2]) = \sigma_1 + \sigma_2 + \sigma_3\]
This 1-chain is a 1-boundary in $C_1(K)$, i.e., an element of $B_1(K)$. It represents the formal sum of the edges that bound the triangle. Finally, we can say that 
\[
B_1(K) = \left\{ n(\sigma_1 + \sigma_2 + \sigma_3) \colon  n \in \mathbb{Z} \right\}
\]
\end{example}

\begin{example}[2-Boundaries of a Tetrahedron]
Let $K$ be a simplicial complex representing a tetrahedron with the oriented vertex set $V = \{\bm{a}_0, \bm{a}_1, \bm{a}_2, \bm{a}_3$. The 2-simplices (oriented triangular faces) of $K$ are:
\[
\sigma_1 = [\bm{a}_0, \bm{a}_1, \bm{a}_2], \quad
\sigma_2 = [\bm{a}_0, \bm{a}_1, \bm{a}_3], \quad
\sigma_3 = [\bm{a}_0, \bm{a}_2, \bm{a}_3], \quad
\sigma_4 = [\bm{a}_1, \bm{a}_2, \bm{a}_3].
\]
The boundary operator $\partial_2$ applied to each 2-simplex is computed as follows:
\begin{gather*}
\partial_2(\sigma_1) = [\bm{a}_1, \bm{a}_2] - [\bm{a}_0, \bm{a}_2] + [\bm{a}_0, \bm{a}_1],\\
\partial_2(\sigma_2) = [\bm{a}_1, \bm{a}_3] - [\bm{a}_0, \bm{a}_3] + [\bm{a}_0, \bm{a}_1],\\
\partial_2(\sigma_3) = [\bm{a}_2, \bm{a}_3] - [\bm{a}_0, \bm{a}_3] + [\bm{a}_0, \bm{a}_2],\\
\partial_2(\sigma_4) = [\bm{a}_2, \bm{a}_3] - [\bm{a}_1, \bm{a}_3] + [\bm{a}_1, \bm{a}_2]
\end{gather*}
Thus, the 2-boundary group $B_2(K)$, which consists of all integer linear combinations of these boundaries, is given by:
\[
B_2(K) = \left\{
n_1 \partial_2(\sigma_1) + n_2 \partial_2(\sigma_2) + n_3 \partial_2(\sigma_3) + n_4 \partial_2(\sigma_4)
\colon n_1, n_2, n_3, n_4 \in \mathbb{Z}
\right\}
\]
\end{example}

\begin{theorem}[$B_p(K) \subseteq Z_p(K)$]\label{t:Group of p boundaries is a subgroup of the group of p cycles}
For any simplicial complex $K$, the group of $p$-boundaries is a subgroup of the group of $p$-cycles. That is, $B_p(K) \subseteq Z_p(K)$ for all $p \geq 0$.
\end{theorem}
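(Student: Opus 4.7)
The plan is to exploit the identity $\partial_p \circ \partial_{p+1} = 0$ from Lemma \ref{l:Boundary of a Boundary is Zero}, which is precisely the algebraic content needed for this containment. The inclusion $B_p(K) \subseteq Z_p(K)$ is really just a rephrasing of the statement that the image of $\partial_{p+1}$ lies in the kernel of $\partial_p$, so the proof should be short and structural rather than computational.

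First, I would pick an arbitrary element $b \in B_p(K)$ and unpack the definition $B_p(K) = \operatorname{im}(\partial_{p+1})$ to obtain some $c \in C_{p+1}(K)$ with $b = \partial_{p+1}(c)$. Next, I would apply $\partial_p$ to $b$ and use associativity of function composition to rewrite $\partial_p(b) = \partial_p(\partial_{p+1}(c)) = (\partial_p \circ \partial_{p+1})(c)$. Invoking the previously established lemma that the boundary of a boundary vanishes, this quantity equals $0$, so $b \in \ker(\partial_p) = Z_p(K)$. Since $b$ was arbitrary, the containment $B_p(K) \subseteq Z_p(K)$ follows.

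To confirm that this is a subgroup containment rather than merely a set-theoretic one, I would briefly remark that both $B_p(K)$ and $Z_p(K)$ are already known to be subgroups of $C_p(K)$: $Z_p(K)$ is the kernel of the homomorphism $\partial_p$, and $B_p(K)$ is the image of the homomorphism $\partial_{p+1}$, so each inherits the abelian group structure from $C_p(K)$. The set inclusion together with closure under the group operations inherited from $C_p(K)$ then yields the subgroup relation.

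I do not anticipate any real obstacle here: the only substantive ingredient is the chain-complex identity $\partial_p \circ \partial_{p+1} = 0$, which is available to us by the preceding lemma. The edge cases $p = 0$ and the boundary behavior of $\partial_1$ are handled uniformly by the same argument, since $\partial_0$ is the zero map by convention and the identity holds for all $p \geq 0$. Thus the proof reduces to a two-line application of the lemma and the definitions of $B_p(K)$ and $Z_p(K)$.
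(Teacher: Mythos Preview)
Your proposal is correct and follows essentially the same approach as the paper: take an arbitrary $b \in B_p(K)$, write $b = \partial_{p+1}(c)$, apply $\partial_p$, and invoke the identity $\partial_p \circ \partial_{p+1} = 0$ from Lemma~\ref{l:Boundary of a Boundary is Zero} to conclude $b \in Z_p(K)$. Your additional remarks about the subgroup structure and edge cases are fine but not strictly needed, as the paper's version omits them.
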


\begin{proof}
Recall that for each $p \geq 0$, we have the boundary homomorphisms
\[
\partial_{p+1} \colon C_{p+1}(K) \to C_p(K), \quad \text{and} \quad
\partial_p \colon C_p(K) \to C_{p-1}(K)
\]
which satisfy the fundamental property $\partial_p \circ \partial_{p+1} = 0$. Let $b \in B_p(K)$. Then, by definition, there exists a $(p+1)$-chain $c \in C_{p+1}(K)$ such that
\[
b = \partial_{p+1}(c).
\]
Applying $\partial_p$ to both sides
\[
\partial_p(b) = \partial_p(\partial_{p+1}(c)) = (\partial_p \circ \partial_{p+1})(c) = 0.
\]
Thus, $b \in \ker(\partial_p) = Z_p(K)$, which shows that every $p$-boundary is a $p$-cycle. Therefore, $B_p(K) \subseteq Z_p(K)$.
\end{proof}

\begin{definition}[Homology Group]\label{d:Homology Group}
Let $K$ be a simplicial complex. The \textbf{$p$-th homology group} of $K$, denoted by $H_p(K)$, is the quotient group of the group of $p$-cycles by the group of $p$-boundaries. That is,
\[
H_p(K) = Z_p(K) / B_p(K)
\]
The elements of $H_p(K)$ are the equivalence classes of $p$-cycles modulo $p$-boundaries:
\[
H_p(K) = \left\{ z + B_p(K) \mid z \in Z_p(K) \right\}
\]
\end{definition}
\begin{question}
What is interpretation of different value of the homology group $H_{p}(K)$ of the simplicial complex $K$.   
\end{question}

\begin{answer}
The interpretation of the different values of the homology group $H_{p}(K)$ of a simplicial complex $K$ is described in the following table.
\begin{figure}[h!]
\centering
\renewcommand{\arraystretch}{1.8}
\begin{tabular}{|c|m{6.5cm}|m{5cm}|}
\hline
\multicolumn{1}{|c|}{\textbf{Homology Group}} & 
\multicolumn{1}{c|}{\textbf{Topological Interpretation}} & 
\multicolumn{1}{c|}{\textbf{Example}} \\
\hline
$H_0(K)$ & Counts the number of \textbf{connected components} of the simplicial complex $K$. & $H_0(K) \cong \mathbb{Z}^2$ means $K$ has two disconnected parts. \\
\hline
$H_1(K)$ & Counts the number of \textbf{independent loops or 1-dimensional holes} that are not boundaries of filled triangles. & $H_1(K) \cong \mathbb{Z}$ for the boundary of a triangle or square. \\
\hline
$H_2(K)$ & Counts the number of \textbf{2-dimensional voids or cavities}, like enclosed surfaces. & $H_2(K) \cong \mathbb{Z}$ for the surface of a sphere or torus. \\
\hline
$H_p(K)$ (General) & Detects \textbf{$p$-dimensional holes} that are not boundaries of $(p+1)$-simplices. Describes shape in dimension $p$. & Higher-dimensional torus: $H_3(K) \cong \mathbb{Z}$ indicates a 3D cavity in a 4D complex. \\
\hline
\end{tabular}
\caption{Interpretation of Homology Group $H_{p}(K)$}
\label{fig:Interpretation of Homology Group}
\end{figure}
\end{answer}

\begin{theorem}
Let $K$ be a simplicial complex. Then the $p$-th homology group
\[
H_p(K) = Z_p(K)/B_p(K)
\]
is a well-defined abelian group, where $Z_p(K)$ is the group of $p$-cycles and $B_p(K)$ is the group of $p$-boundaries.
\end{theorem}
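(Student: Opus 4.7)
The plan is to assemble three ingredients: that $Z_p(K)$ is an abelian subgroup of $C_p(K)$, that $B_p(K)$ is a subgroup of $Z_p(K)$, and that every subgroup of an abelian group is automatically normal, so the quotient $Z_p(K)/B_p(K)$ exists and inherits an abelian group structure. Each of these is either already proved in the excerpt or follows by standard kernel/image arguments, so the proof is essentially a bookkeeping synthesis rather than a new construction.

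First, I would recall from Theorem \ref{thm:CpK-is-group} that $C_p(K)$ is an abelian group under pointwise addition, and from Theorem \ref{t:Boundary Operator as Homomorphism} that $\partial_p \colon C_p(K) \to C_{p-1}(K)$ is a group homomorphism. Then $Z_p(K) = \ker(\partial_p)$ is a subgroup of $C_p(K)$ by the standard fact that kernels of homomorphisms are subgroups, and similarly $B_p(K) = \operatorname{im}(\partial_{p+1})$ is a subgroup of $C_p(K)$ because images of homomorphisms are subgroups. Being subgroups of the abelian group $C_p(K)$, both $Z_p(K)$ and $B_p(K)$ are themselves abelian.

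Next I would invoke Theorem \ref{t:Group of p boundaries is a subgroup of the group of p cycles}, which gives $B_p(K) \subseteq Z_p(K)$, so $B_p(K)$ sits inside $Z_p(K)$ as a subgroup. Since $Z_p(K)$ is abelian, every subgroup of $Z_p(K)$ is normal; in particular $B_p(K) \triangleleft Z_p(K)$. Consequently the quotient group
\[
H_p(K) = Z_p(K)/B_p(K)
\]
is well-defined, with cosets $z + B_p(K)$ for $z \in Z_p(K)$ and with addition
\[
(z_1 + B_p(K)) + (z_2 + B_p(K)) = (z_1 + z_2) + B_p(K).
\]
I would then briefly check that this operation is well-defined, meaning that if $z_1 \sim z_1'$ and $z_2 \sim z_2'$ modulo $B_p(K)$, then $z_1 + z_2 \sim z_1' + z_2'$, which follows because $B_p(K)$ is closed under addition.

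Finally, to conclude that $H_p(K)$ is abelian, I would observe that commutativity is inherited directly from $Z_p(K)$: for any two cosets, $(z_1 + B_p(K)) + (z_2 + B_p(K)) = (z_1 + z_2) + B_p(K) = (z_2 + z_1) + B_p(K) = (z_2 + B_p(K)) + (z_1 + B_p(K))$. The identity is $0 + B_p(K) = B_p(K)$, and the inverse of $z + B_p(K)$ is $-z + B_p(K)$. The only place I expect any subtlety at all is being careful to cite the containment $B_p(K) \subseteq Z_p(K)$ explicitly, since without it the quotient would not make sense; everything else is a direct consequence of the general fact that a quotient of an abelian group by a subgroup is an abelian group. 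There is no real obstacle here, just a careful assembly of previously established facts.
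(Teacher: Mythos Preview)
Your proposal is correct and follows essentially the same approach as the paper's proof: both establish $B_p(K)\subseteq Z_p(K)$ via $\partial_p\circ\partial_{p+1}=0$ and then take the quotient inside the abelian group $C_p(K)$. Your version is simply more thorough, spelling out the kernel/image subgroup facts, normality in abelian groups, well-definedness of coset addition, and inherited commutativity, whereas the paper's three-sentence argument leaves all of that implicit.
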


\begin{proof}
Since the chain group $C_p(K)$ is an abelian group, and the boundary homomorphisms
\[
\partial_{p+1} \colon C_{p+1}(K) \to C_p(K), \quad \partial_p \colon C_p(K) \to C_{p-1}(K)
\]
satisfy $\partial_p \circ \partial_{p+1} = 0$, it follows that
\[
\text{im}(\partial_{p+1}) \subseteq \ker(\partial_p), \quad \text{i.e.,} \quad B_p(K) \subseteq Z_p(K).
\]
Hence, $B_p(K)$ is a subgroup of $Z_p(K)$. The quotient $Z_p(K)/B_p(K)$ is therefore a well-defined group, called the $p$-th homology group $H_p(K)$.
\end{proof}

\section{Computation of Homology Group of Simplicial Complex}\label{s:Computation of Homology Group of Simplicial Complex}

\begin{algorithm}[Computation of $p$-th Homology Group $H_p(K)$]
Let $K$ be a finite simplicial complex. This algorithm computes the $p$-th homology group $H_p(K) = \ker(\partial_p)/\operatorname{im}(\partial_{p+1})$.

\begin{enumerate}[label=\textbf{Step \arabic*.}, itemsep=1em]
    \item \textbf{List all simplices.}\\
    For each dimension $p = 0, 1, 2, \dots$, list all $p$-simplices $\sigma_i$ in $K$ with a fixed orientation.

    \item \textbf{Construct chain groups.} \\
    Define the $p$-chain group $C_p(K)$ as the free abelian group generated by $p$-simplices:
    \[
    C_p(K) = \mathbb{Z}\langle \sigma_1, \sigma_2, \dots, \sigma_n \rangle.
    \]
    
    \item \textbf{Define boundary maps.} \\
    Define the boundary homomorphism $\partial_p: C_p(K) \to C_{p-1}(K)$ and represent it as a matrix. Each entry depends on the incidence and orientation of simplices:
    \begin{itemize}
        \item $+1$ if the face occurs with positive orientation,
        \item $-1$ if with negative orientation,
        \item $0$ otherwise.
    \end{itemize}

    \item \textbf{Compute cycles and boundaries.}
    \begin{itemize}
        \item Compute the kernel: $Z_p(K) = \ker(\partial_p)$ (the group of $p$-cycles).
        \item Compute the image: $B_p(K) = \operatorname{im}(\partial_{p+1})$ (the group of $p$-boundaries).
    \end{itemize}
    Use row reduction or Smith normal form to determine generators.

    \item \textbf{Compute homology group.} \\
    Take the quotient:
    \[
    H_p(K) = \frac{Z_p(K)}{B_p(K)} = \frac{\ker(\partial_p)}{\operatorname{im}(\partial_{p+1})}.
    \]
    Express $H_p(K)$ as a direct sum of abelian groups:
    \[
    H_p(K) \cong \mathbb{Z}^{\beta_p} \oplus \mathbb{Z}_{d_1} \oplus \cdots \oplus \mathbb{Z}_{d_k},
    \]
    where $\beta_p$ is the $p$-th Betti number, and $\mathbb{Z}_{d_i}$ are torsion components.
\end{enumerate}
\end{algorithm}

Let us see the computation of homology groups for the boundary of a square using Smith Normal Form

\begin{example}[Homology Group of Boundary of Triangle]\label{eg:Homology Group of Boundary of Triangle}
Find the $1$-th homology group of the simplicial complex $K$ whose underlying space is boundary of triangle.  

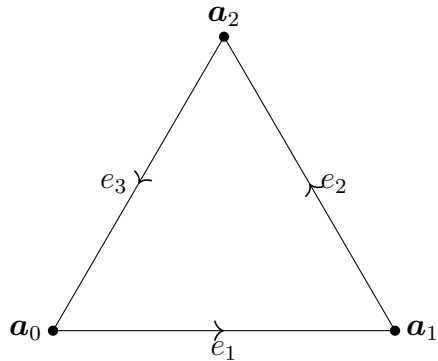
\begin{figure}[h!]
    \centering
    \begin{tikzpicture}[scale=4.5,
  edge/.style={postaction={decorate}, decoration={markings,
  mark=at position 0.5 with {\arrow[scale=1.5]{>}}}}] 

\coordinate (A0) at (0,0);
\coordinate (A1) at (1,0);
\coordinate (A2) at (0.5,0.866); 

\draw[edge] (A0) -- node[below] {$e_1$} (A1);
\draw[edge] (A1) -- node[right] {$e_2$} (A2);
\draw[edge] (A2) -- node[left] {$e_3$} (A0);

\foreach \pt in {A0, A1, A2}
  \fill (\pt) circle[radius=0.015cm];

\node[left] at (A0) {$\bm{a}_0$};
\node[right] at (A1) {$\bm{a}_1$};
\node[above] at (A2) {$\bm{a}_2$};

\end{tikzpicture}
    \caption{Boundary of Triangle}
    \label{fig:Boundary of Triangle}
\end{figure}
Let $V = \{\bm{a}_0, \bm{a}_1,\bm{a}_2\}$ be the set of vertex of the simplicial complex $K$ whose underlying space is boundary of triangle. Then $K$ is 
\[K = \{\{\bm{a}_0\}, \{\bm{a}_1\}, \{\bm{a}_2\}, \{\bm{a}_0, \bm{a}_1\}, \{\bm{a}_1,\bm{a}_2\}, \{\bm{a}_2,\bm{a}_0\}\}\]
Therefore, the sets of oriented 0-simplices and oriented 1-simplices are respectively
\begin{gather*}
[\bm{a}_0], [\bm{a}_1], [\bm{a}_2] \\
[\bm{a}_0, \bm{a}_1], [\bm{a}_1, \bm{a}_2], [\bm{a}_2, \bm{a}_0]
\end{gather*}
We want to find $1$-th homology group $H_{1}(K)$ of the simplicial complex $K$. So by definition, 
\begin{equation}\label{eq:1 th homology group}
H_{1}(K) = \dfrac{Z_{1}(K)}{B_{1}(K)} = \dfrac{\ker(\partial_1)}{\mrm{im}(\partial_2)}
\end{equation}
where maps $\partial_1 \colon C_{1}(K) \to C_{0}(K)$ and $\partial_2 \colon C_{2}(K) \to C_{1}(K)$ are boundary maps on the group of 1-chains $C_{1}(K)$ and  group of 2-chains $C_{2}(K)$ respectively. 

Now first we find the groups of chain $C_{0}(K),  C_{1}(K)$ and $ C_{2}(K)$. By definition of group of chain,
\begin{equation}\label{eq:0 chain group}
C_{0}(K) = \left \{c = \sum_{i = 0}^{2}n_{i}[\bm{a}_{i}] \colon n_{i} \in \mbb{Z} \right \} = \mbb{Z}\langle [\bm{a}_0], [\bm{a}_1], [\bm{a}_2] \rangle \cong \mbb{Z}^3
\end{equation}
Where $\mbb{Z}\langle [\bm{a}_0], [\bm{a}_1], [\bm{a}_2] \rangle$ is free abelian group on set $\{[\bm{a}_0], [\bm{a}_1], [\bm{a}_2]\}$ in which every elements is a formal linear combination of these generators $[\bm{a}_0], [\bm{a}_1], [\bm{a}_2]$ with integer coefficients. The $\mbb{Z}^{3} = \{(n_0, n_1, n_2) \colon n_i \in \mbb{Z}\}$ is abelian group. We have a map $\phi \colon C_{0}(K) \to \mbb{Z}^3$ defined by $\phi(c) = (n_0, n_1, n_2)$ for all $c \in C_{0}(K)$ which is homomorphism, therefore $C_{0}(K) \cong \mbb{Z}^3$. Similarly, $C_{1}(K) \cong \mbb{Z}^3$. Since 2-simplices does not exists in $K$, therefore $C_{2}(K) = 0$ is trivial. Finally,
\begin{equation}\label{eq:chain groups}
C_{0}(K) \cong \mbb{Z}^3, C_{1}(K) \cong \mbb{Z}^3,  
C_{2}(K) = 0  
\end{equation}
The boundary map $\partial_1 \colon C_1(K) \to C_0(K)$ is defined by:
\begin{align*}
\partial_1[\bm{a}_0, \bm{a}_1] &= [\bm{a}_1] - [\bm{a}_0] \\
\partial_1[\bm{a}_1, \bm{a}_2] &= [\bm{a}_2] - [\bm{a}_1] \\
\partial_1[\bm{a}_2, \bm{a}_0] &= [\bm{a}_0] - [\bm{a}_2]
\end{align*}
In matrix form, with respect to the ordered bases 
\[
C_1(K): ([\bm{a}_0, \bm{a}_1], [\bm{a}_1, \bm{a}_2], [\bm{a}_2, \bm{a}_0]) \\
C_0(K): ([\bm{a}_0], [\bm{a}_1], [\bm{a}_2])
\]
we have:
\[
[\partial_1] =
\begin{bmatrix}
-1 &  0 &  1 \\
 1 & -1 &  0 \\
 0 &  1 & -1
\end{bmatrix}
\]
To compute $\ker(\partial_1)$, let $x = (x_1, x_2, x_3)^T \in \mathbb{Z}^3$ be a 1-chain:
\[
x = x_1[\bm{a}_0, \bm{a}_1] + x_2[\bm{a}_1, \bm{a}_2] + x_3[\bm{a}_2, \bm{a}_0]
\]
Then $\partial_1(x) = 0$ gives the system:
\begin{align*}
-x_1 + x_3 &= 0 \\
x_1 - x_2 &= 0 \\
x_2 - x_3 &= 0
\end{align*}
which implies $x_1 = x_2 = x_3$. Thus,
\[
\ker(\partial_1) = \mathbb{Z} \langle [\bm{a}_0, \bm{a}_1] + [\bm{a}_1, \bm{a}_2] + [\bm{a}_2, \bm{a}_0] \rangle \cong \mathbb{Z}
\]
Since $C_2(K) = 0$, we have $\mathrm{im}(\partial_2) = 0$, so
\[
H_1(K) = \dfrac{\ker(\partial_1)}{\mathrm{im}(\partial_2)} \cong \ker(\partial_1) \cong \mathbb{Z}
\]
The first homology group of the simplicial complex $K$ is $H_1(K) \cong \mathbb{Z}$.
\end{example}

\begin{example}[Homology Group of Boundary of Square]\label{eg:Homology Group of Boundary of Square}
Find the $1$-th homology group of the simplicial complex $K$ whose underlying space is boundary of square.   

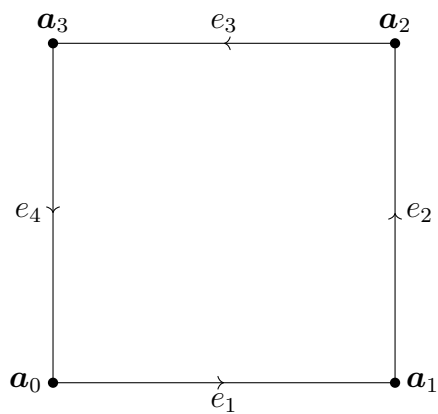
\begin{figure}[h!]
\centering
\begin{tikzpicture}[scale=4.5,
  edge/.style={postaction={decorate}, decoration={markings,
  mark=at position 0.5 with {\arrow{>}}}}]

\coordinate (A0) at (0,0);
\coordinate (A1) at (1,0);
\coordinate (A2) at (1,1);
\coordinate (A3) at (0,1);

\draw[edge] (A0) -- node[below] {$e_1$} (A1);
\draw[edge] (A1) -- node[right] {$e_2$} (A2);
\draw[edge] (A2) -- node[above] {$e_3$} (A3);
\draw[edge] (A3) -- node[left] {$e_4$} (A0);


\node[left] at (A0) {$\bm{a}_0$};
\node[right] at (A1) {$\bm{a}_1$};
\node[above] at (A2) {$\bm{a}_2$};
\node[above] at (A3) {$\bm{a}_3$};

\foreach \pt in {A0, A1, A2, A3}
  \fill (\pt) circle[radius=0.015cm];


\end{tikzpicture}
\caption{Boundary of Square}
\label{fig:Boundary of Square}
\end{figure}

Let $K$ be the simplicial complex whose underlying space is the boundary of a square. The vertex set is $V = \{\bm{a}_0, \bm{a}_1, \bm{a}_2, \bm{a}_3\}$ and the simplicial complex is given by
\[
K = \left\{
\{\bm{a}_0\}, \{\bm{a}_1\}, \{\bm{a}_2\}, \{\bm{a}_3\},
\{\bm{a}_0, \bm{a}_1\}, \{\bm{a}_1, \bm{a}_2\}, \{\bm{a}_2, \bm{a}_3\}, \{\bm{a}_3, \bm{a}_0\}
\right\}
\]
The oriented simplices are
\begin{itemize}
    \item 0-simplices: $[\bm{a}_0], [\bm{a}_1], [\bm{a}_2], [\bm{a}_3]$
    \item 1-simplices: $[\bm{a}_0, \bm{a}_1], [\bm{a}_1, \bm{a}_2], [\bm{a}_2, \bm{a}_3], [\bm{a}_3, \bm{a}_0]$
\end{itemize}
Since there are no 2-simplices, $C_2(K) = 0$. Now we find the chain groups as follows:
\[
\begin{aligned}
C_0(K) &= \mathbb{Z}\langle [\bm{a}_0], [\bm{a}_1], [\bm{a}_2], [\bm{a}_3] \rangle \cong \mathbb{Z}^4 \\
C_1(K) &= \mathbb{Z}\langle [\bm{a}_0, \bm{a}_1], [\bm{a}_1, \bm{a}_2], [\bm{a}_2, \bm{a}_3], [\bm{a}_3, \bm{a}_0] \rangle \cong \mathbb{Z}^4 \\
C_2(K) &= 0
\end{aligned}
\]
The Boundary Map $\partial_1 \colon C_1(K) \to C_0(K)$ as follows:
\[
\begin{aligned}
\partial_1[\bm{a}_0, \bm{a}_1] &= [\bm{a}_1] - [\bm{a}_0] \\
\partial_1[\bm{a}_1, \bm{a}_2] &= [\bm{a}_2] - [\bm{a}_1] \\
\partial_1[\bm{a}_2, \bm{a}_3] &= [\bm{a}_3] - [\bm{a}_2] \\
\partial_1[\bm{a}_3, \bm{a}_0] &= [\bm{a}_0] - [\bm{a}_3]
\end{aligned}
\]
The matrix representation of $\partial_1$ with respect to the standard bases is
\[
[\partial_1] = 
\begin{bmatrix}
-1 & 0 & 0 & 1 \\
1 & -1 & 0 & 0 \\
0 & 1 & -1 & 0 \\
0 & 0 & 1 & -1
\end{bmatrix}
\]
Now we compute $\ker(\partial_1)$. Let $\bm{x} = (x_1, x_2, x_3, x_4)^T \in C_1(K)$. Then
\[
\partial_1(\bm{x}) = 0 \Longleftrightarrow
\begin{cases}
-x_1 + x_4 = 0 \\
x_1 - x_2 = 0 \\
x_2 - x_3 = 0 \\
x_3 - x_4 = 0
\end{cases}
\Rightarrow x_1 = x_2 = x_3 = x_4
\]
Hence,
\[
\ker(\partial_1) = \mathbb{Z}\left( [\bm{a}_0, \bm{a}_1] + [\bm{a}_1, \bm{a}_2] + [\bm{a}_2, \bm{a}_3] + [\bm{a}_3, \bm{a}_0] \right) \cong \mathbb{Z}
\]
Now we compute $H_1(K)$. Since $C_2(K) = 0$, we have $\operatorname{im}(\partial_2) = 0$. Therefore,
\[
H_1(K) = \frac{\ker(\partial_1)}{\operatorname{im}(\partial_2)} = \ker(\partial_1) \cong \mathbb{Z}
\]
So $H_1(K) \cong \mathbb{Z}$.
\end{example}

\begin{example}[Homology Group of Triangulated Boundary of Square]\label{eg:Homology Group of Triangulated Boundary of Square}
Find the $1$-th homology group of the simplicial complex $K$ whose underlying space is triangulated boundary of square.     

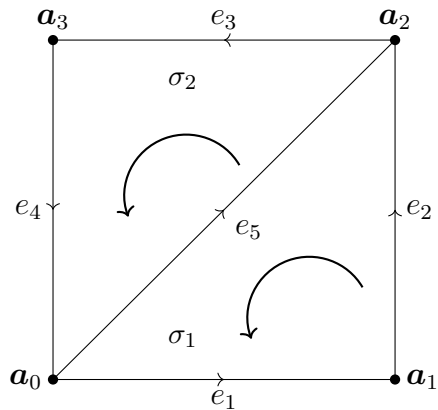
\begin{figure}[h!]
\centering
\begin{tikzpicture}[scale=4.5,
  edge/.style={postaction={decorate}, decoration={markings,
  mark=at position 0.5 with {\arrow{>}}}}]

\coordinate (A0) at (0,0);
\coordinate (A1) at (1,0);
\coordinate (A2) at (1,1);
\coordinate (A3) at (0,1);

\draw[edge] (A0) -- node[below] {$e_1$} (A1);
\draw[edge] (A1) -- node[right] {$e_2$} (A2);
\draw[edge] (A2) -- node[above] {$e_3$} (A3);
\draw[edge] (A3) -- node[left] {$e_4$} (A0);

\draw[edge] (A0) -- node[below right] {$e_5$} (A2);

\node[left] at (A0) {$\bm{a}_0$};
\node[right] at (A1) {$\bm{a}_1$};
\node[above] at (A2) {$\bm{a}_2$};
\node[above] at (A3) {$\bm{a}_3$};

\foreach \pt in {A0, A1, A2, A3}
  \fill (\pt) circle[radius=0.015cm];

\draw[<-, thick] (0.58,0.12) arc[start angle=200,end angle=30,radius=0.18cm];
\node at (0.38,0.12) {\small $\sigma_1$};

\draw[<-, thick] (0.22,0.48)arc[start angle=200,end angle=30,radius=0.18cm];
\node at (0.38,0.88) {\small $\sigma_2$};
\end{tikzpicture}
\caption{Triangulated Boundary of Square}
\label{fig:Triangulated Boundary of Square}
\end{figure}
Let $V = \{\bm{a}_0, \bm{a}_1,\bm{a}_2, \bm{a}_3\}$ be the set of vertex of the simplicial complex $K$ whose underlying space is a square subdivided into two triangles by a diagonal. We assume the square is oriented so that
\begin{itemize}
    \item The \emph{boundary edges} are:
    \[
    e_1 = [\bm{a}_0, \bm{a}_1],\quad e_2 = [\bm{a}_1, \bm{a}_2],\quad e_3 = [\bm{a}_2, \bm{a}_3],\quad e_4 = [\bm{a}_3, \bm{a}_0].
    \]
    \item The \emph{diagonal} is:
    \[
    e_5 = [\bm{a}_0, \bm{a}_2].
    \]
    \item The two \emph{2-simplices} (triangles) are:
    \[
    \sigma_1 = [\bm{a}_0, \bm{a}_1, \bm{a}_2], \quad
    \sigma_2 = [\bm{a}_0, \bm{a}_2, \bm{a}_3].
    \]
\end{itemize}
Thus, the simplicial complex is:
\begin{multline*}
K = \{
\{\bm{a}_0\}, \{\bm{a}_1\}, \{\bm{a}_2\}, \{\bm{a}_3\}, \{\bm{a}_0, \bm{a}_1\}, \{\bm{a}_1,\bm{a}_2\},  \{\bm{a}_2,\bm{a}_3\}, \{\bm{a}_3,\bm{a}_0\}, \{\bm{a}_0,\bm{a}_2\},\\
\{\bm{a}_0, \bm{a}_1, \bm{a}_2\}, \{\bm{a}_0, \bm{a}_2, \bm{a}_3\}
\}   
\end{multline*}
\textbf{Chain Groups:}The chain groups are as follows:
\[
\begin{aligned}
C_2(K) &= \mathbb{Z}\langle \sigma_1, \sigma_2 \rangle \cong \mathbb{Z}^2, \\
C_1(K) &= \mathbb{Z}\langle e_1, e_2, e_3, e_4, e_5 \rangle \cong \mathbb{Z}^5, \\
C_0(K) &= \mathbb{Z}\langle [\bm{a}_0], [\bm{a}_1], [\bm{a}_2], [\bm{a}_3] \rangle \cong \mathbb{Z}^4.
\end{aligned}
\]
\textbf{Boundary Maps:}Now we define boundary maps $\partial_2$ and $\partial_1$ on the $C_{2}(K)$ and $C_{1}(K)$ respectively. 

\paragraph{The map \(\partial_2 \colon C_2(K) \to C_1(K)\):} 

Recall that for a 2-simplex \([v_0,v_1,v_2]\) the boundary is
\[
\partial_2([v_0,v_1,v_2]) = [v_1,v_2] - [v_0,v_2] + [v_0,v_1].
\]
Thus,
\[
\begin{aligned}
\partial_2(\sigma_1) &= [\bm{a}_1,\bm{a}_2] - [\bm{a}_0,\bm{a}_2] + [\bm{a}_0,\bm{a}_1] = e_2 - e_5 + e_1, \\
\partial_2(\sigma_2) &= [\bm{a}_2,\bm{a}_3] - [\bm{a}_0,\bm{a}_3] + [\bm{a}_0,\bm{a}_2].
\end{aligned}
\]
Since the edge \([\bm{a}_0, \bm{a}_3]\) is not in our list of oriented 1-simplices, we express it in terms of the chosen orientations:
\[
[\bm{a}_0,\bm{a}_3] = -[\bm{a}_3,\bm{a}_0] = -e_4.
\]
Thus, 
\[
\partial_2(\sigma_2) = [\bm{a}_2,\bm{a}_3] - (-e_4) + [\bm{a}_0,\bm{a}_2] = e_3 + e_4 + e_5.
\]
It is common to choose an orientation on the 2-simplices so that the boundaries match. Here, if we reverse the orientation on \(\sigma_2\) (or reinterpret the sign on \(e_5\)) we can obtain consistency. For our purposes, note that the images are:
\[
\operatorname{im}(\partial_2) = \operatorname{span}_{\mathbb{Z}}\{\, e_1 + e_2 - e_5,\; e_3 + e_4 + e_5 \,\}.
\]

\paragraph{The map \(\partial_1 \colon C_1(K) \to C_0(K)\):}

For a 1-simplex \([v_i,v_j]\) we have
\[
\partial_1([v_i,v_j]) = [v_j] - [v_i].
\]
Thus,
\[
\begin{aligned}
\partial_1(e_1) &= [\bm{a}_1] - [\bm{a}_0],\\[1mm]
\partial_1(e_2) &= [\bm{a}_2] - [\bm{a}_1],\\[1mm]
\partial_1(e_3) &= [\bm{a}_3] - [\bm{a}_2],\\[1mm]
\partial_1(e_4) &= [\bm{a}_0] - [\bm{a}_3],\\[1mm]
\partial_1(e_5) &= [\bm{a}_2] - [\bm{a}_0].
\end{aligned}
\]
A matrix representation of \(\partial_1\) (with basis \( e_1,e_2,e_3,e_4,e_5\) for \(C_1\) and \([\bm{a}_0],[\bm{a}_1],[\bm{a}_2],[\bm{a}_3]\) for \(C_0\)) is:
\[
[\partial_1] =
\begin{bmatrix}
-1 &  0 &  0 &  1 & -1 \\
 1 & -1 &  0 &  0 &  0 \\
 0 &  1 & -1 &  0 &  1 \\
 0 &  0 &  1 & -1 &  0 
\end{bmatrix}.
\]

\textbf{Computing \(H_1(K)\)}

The first homology group is defined as
\[
H_1(K) = \frac{\ker(\partial_1)}{\operatorname{im}(\partial_2)}.
\]

\paragraph{Step 1: The Rank of \(\ker(\partial_1)\)}
The group \(C_1(K)\) has rank 5. By analyzing the matrix of \(\partial_1\) (or by appropriate row reduction), one may show that the rank of \(\operatorname{im}(\partial_1)\) is 3. Then, by the rank-nullity theorem,
\[
\operatorname{rank}(\ker(\partial_1)) = 5 - 3 = 2.
\]

\paragraph{Step 2: The Rank of \(\operatorname{im}(\partial_2)\)}
Since we have two 2-simplices and the images
\[
\partial_2(\sigma_1) = e_1 + e_2 - e_5, \quad \partial_2(\sigma_2) = e_3 + e_4 + e_5,
\]
one can check that these two elements are independent in \(C_1(K)\). Hence, 
\[
\operatorname{rank}(\operatorname{im}(\partial_2)) = 2.
\]

\paragraph{Step 3: Conclusion for \(H_1(K)\)}
Since \(\ker(\partial_1)\) is of rank 2 and \(\operatorname{im}(\partial_2)\) is a subgroup of rank 2 inside \(\ker(\partial_1)\), we have
\[
H_1(K) = \frac{\ker(\partial_1)}{\operatorname{im}(\partial_2)} \cong 0.
\]
Therefore, $H_1(K) = 0$.
\end{example}

\begin{observation}
From above three examples we can see that the homology group $H_{p}(K)$ of simplicial complex $K$ depends only on the underlying space $|K|$.  
\end{observation}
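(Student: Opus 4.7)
The plan is to upgrade the empirical observation from the three worked examples into a genuine theorem of topological invariance: if $K$ and $L$ are finite simplicial complexes with $|K|\cong|L|$, then $H_p(K)\cong H_p(L)$ for every $p\geq 0$. The strategy proceeds in two independent pieces that are finally combined: (i) show that homology is invariant under combinatorial subdivision of a fixed complex, and (ii) show that any continuous map between the underlying spaces induces a well-defined homomorphism on homology, with homeomorphisms inducing isomorphisms. Together these give the desired statement, because a homeomorphism $|K|\to|L|$ will produce, via sufficiently fine iterated subdivisions, a simplicial approximation whose induced map on homology is forced to be an isomorphism.

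For the first piece, I would fix an arbitrary simplicial complex $K$ and let $K'$ denote its barycentric subdivision. The aim is to construct a chain map $\mathrm{sd}_p\colon C_p(K)\to C_p(K')$ that commutes with the boundary operators already developed in Section 3, and to exhibit an inverse chain map (for example via the standard ``simplicial approximation to the identity'' $\lambda\colon K'\to K$ sending each barycenter to one of the vertices of the carrier simplex). Both composites should be shown to be chain-homotopic to the identity by writing down an explicit chain homotopy $D$ satisfying $\partial D+D\partial=\mathrm{id}-\mathrm{sd}\circ\lambda_\#$ on each generator. Once that identity is in place, passing to cycles modulo boundaries yields mutually inverse isomorphisms $H_p(K)\cong H_p(K')$. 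Iterating gives invariance under arbitrarily many barycentric subdivisions.

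For the second piece, the key tool is the simplicial approximation theorem: given a continuous map $f\colon|K|\to|L|$, after sufficiently many subdivisions $K^{(n)}$ of $K$ there exists a simplicial map $\varphi\colon K^{(n)}\to L$ whose geometric realization is homotopic to $f$. A simplicial map $\varphi$ induces a chain map $\varphi_\#\colon C_p(K^{(n)})\to C_p(L)$ by sending each oriented simplex to the image simplex (or to zero if the image is degenerate), and commutativity with $\partial$ is a direct computation from the alternating-sum formula. This induces $\varphi_*\colon H_p(K^{(n)})\to H_p(L)$, and combining with the subdivision isomorphism from piece (i) produces a map $f_*\colon H_p(K)\to H_p(L)$. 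One then verifies functoriality, namely $(g\circ f)_*=g_*\circ f_*$ and $\mathrm{id}_*=\mathrm{id}$, so that if $f$ is a homeomorphism then $f_*$ is an isomorphism.

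The hard part, by a wide margin, is constructing the chain homotopy implementing subdivision invariance and proving the simplicial approximation theorem in enough generality to cover arbitrary continuous maps; both require careful inductive bookkeeping on simplex dimension and control over the mesh of iterated subdivisions via Lebesgue numbers of open covers. A secondary difficulty is verifying that the induced map $f_*$ is independent of the chosen simplicial approximation, which needs a chain-level homotopy between any two simplicial approximations that are contiguous in the standard sense. Once these technical lemmas are in hand, the final deduction that $|K|\cong|L|\Rightarrow H_p(K)\cong H_p(L)$ is a formal consequence of functoriality.
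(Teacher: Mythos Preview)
The paper does not prove this statement at all: it is presented purely as an \emph{observation} drawn from the three preceding worked examples (the boundary of a triangle and the boundary of a square, both modeling $S^1$, each yield $H_1\cong\mathbb{Z}$, while the filled triangulated square modeling a disk yields $H_1=0$), and the surrounding discussion treats it as empirical motivation rather than as a theorem requiring argument. There is no proof in the paper to compare against.

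Your proposal, by contrast, is a correct outline of the classical proof of topological invariance of simplicial homology: subdivision invariance via an explicit chain map $\mathrm{sd}$ and a chain homotopy to a simplicial approximation of the identity, the simplicial approximation theorem to replace continuous maps by simplicial ones after iterated subdivision, and functoriality of induced maps to conclude that homeomorphisms yield isomorphisms on $H_p$. You have also correctly identified the two genuinely hard ingredients (the chain-homotopy bookkeeping and the Lebesgue-number mesh control underlying simplicial approximation) and the secondary issue of well-definedness of $f_*$ via contiguity. This is exactly the standard route found in the references the paper cites (e.g., Munkres), and it goes well beyond what the paper itself attempts; the paper is content to gesture at examples and defer the rigorous result to the literature.
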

Hence, an important question arises as follows.  
\begin{question}
What happens to the calculation of homology groups of simplicial complexes if we increase the number of simplices in it.  
\end{question}

\begin{answer}
If we increase the number of simplices in a simplicial complex, the calculation of homology groups becomes:
\begin{enumerate}
    
\item More complex computationally: There are more chains and boundaries to consider, increasing the size of chain groups and the complexity of boundary maps.

\item  More refined topologically: The homology groups may better capture the topological structure, possibly detecting additional features (like holes) that were not represented with fewer simplices.

\item But the actual homology groups might not change if the added simplices do not alter the topology of the space (e.g., adding subdivisions or extra simplices that fill in nothing new).
\end{enumerate}
\end{answer}

So, increasing simplices refines the combinatorial structure but doesn't necessarily change the topological features the homology detects. Moreover it is not easy to compute the various components like groups of cycles $Z_{p}(K)$ and $B_{p}(K)$ while we are increasing the simplices and eventually it is difficult to compute the homology group in this case. Therefore, let us directly look at a short cut method for calculating the homology group which does not involve the groups of cycles $Z_{p}(K)$ and $B_{p}(K)$. Let us first look at some important definitions that are directly involved in the short cut method of calculating homology groups.

\begin{definition}[Chain carried by Subcomplex]\label{d:Chain carried by Subcomplex}
A chain $c \in C_p(K)$ is carried by a subcomplex $L \subseteq K$ if $c(\sigma) = 0$ for every $p$-simplex $\sigma \notin L$. In simpler terms, $c$ is carried by $L$ if it assigns nonzero coefficients only to simplices that are in $L$ and assigns zero to everything outside of $L$.
\end{definition}

\begin{example}
Let simplicial complex representing the boundary of a triangle with verttex set $V = \{\bm{a}_0, \bm{a}_1, \bm{a}_2\}$ is 
\[K = \{\{\bm{a}_0\}, \{\bm{a}_0\}, \{\bm{a}_0\}, \{\bm{a}_0, \bm{a}_1\}, \{\bm{a}_1, \bm{a}_2\}, \{\bm{a}_2, \bm{a}_0\}\}\]
Let subcomplex of $K$ consisting of only two edges and the three vertices is 
\[L = \{\{\bm{a}_0\}, \{\bm{a}_0\}, \{\bm{a}_0\}, \{\bm{a}_0, \bm{a}_1\}, \{\bm{a}_1, \bm{a}_2\}\}\]
Now, consider the 1-chain $c \in C_{1}(K)$ such that
\[
c = 2[\bm{a}_0, \bm{a}_1] + 3[\bm{a}_1, \bm{a}_2] + 0[\bm{a}_2, \bm{a}_0]
\]
We see that
\begin{itemize}
\item The edges $[\bm{a}_0, \bm{a}_1]$ and $[\bm{a}_1, \bm{a}_2]$ appear with nonzero coefficients.
\item The edge $[\bm{a}_2, \bm{a}_0]$, which is not in $L$, appears with coefficient zero.
\end{itemize}
Therefore, chain $c$ is carried by the subcomplex $L$, because it assigns nonzero values only to simplices that belong to $L$, and zero to the simplex $[\bm{a}_2, \bm{a}_0]$, which is not in $L$.
\end{example}

\begin{definition}[Homologous Chains]\label{d:Homologous Chains}
Two $p$-chains $c$ and $c'$ in $C_{p}(K)$ are said to be homologous if their difference is a boundary; that is, 
\[c - c' = \partial_{p + 1}(d)\]
for some $(p + 1)$ chain $d$. Being homologous means $c$ and $c'$ represent the same element in the homology group $H_p(K)$.   
\end{definition}

\begin{remark}
In particular, if $c = \partial_{p + 1}(d)$, we say that $c$ is homologous to zero, or simply $c$ is bounds. In this case we say that $c$ is boundary of a $(p + 1)$-simplex $d$.
\end{remark}

\begin{example}
Let simplicial complex representing a filled triangle with vertex set $V = \{\bm{a}_0, \bm{a}_1, \bm{a}_2\}$ is 
\[K = \{\{\bm{a}_0\}, \{\bm{a}_0\}, \{\bm{a}_0\}, \{\bm{a}_0, \bm{a}_1\}, \{\bm{a}_1, \bm{a}_2\}, \{\bm{a}_2, \bm{a}_0\}, \{\bm{a}_0, \bm{a}_1, \bm{a}_2\}\} \]
Define a 2-chain $d = [\bm{a}_0, \bm{a}_1, \bm{a}_2] \in C_2(K)$. Then the boundary of $d$ is
\[\partial_2(d) = [\bm{a}_1, \bm{a}_2] - [\bm{a}_0, \bm{a}_2] + [\bm{a}_0, \bm{a}_1]\]
Now consider two 1-chains $c$ and $c'$ in $C_{1}(K)$ such that 
\[c = [\bm{a}_1, \bm{a}_2] - [\bm{a}_0, \bm{a}_2] + [\bm{a}_0, \bm{a}_1], \qquad c' = 0\]
Then we have
\[c - c' = c = \partial_2(d)\]
Therefore, $c$ and $c'$ are homologous 1-chains because their difference is the boundary of the 2-chain $d$.
\end{example}

\begin{example}[Non-zero Homologous 1-Chains]\label{eg:Non-zero Homologous 1-Chains}
Let simplicial complex representing a filled triangle with vertex set $V = \{\bm{a}_0, \bm{a}_1, \bm{a}_2\}$ is 
\[K = \{\{\bm{a}_0\}, \{\bm{a}_0\}, \{\bm{a}_0\}, \{\bm{a}_0, \bm{a}_1\}, \{\bm{a}_1, \bm{a}_2\}, \{\bm{a}_2, \bm{a}_0\}, \{\bm{a}_0, \bm{a}_1, \bm{a}_2\}\} \]
Define a 2-chain $d = [\bm{a}_0, \bm{a}_1, \bm{a}_2] \in C_2(K)$. Then the boundary of $d$ is
\[\partial_2(d) = [\bm{a}_1, \bm{a}_2] - [\bm{a}_0, \bm{a}_2] + [\bm{a}_0, \bm{a}_1]\]
Now consider two 1-chains $c$ and $c'$ in $C_{1}(K)$ such that 
\[c = [\bm{a}_1, \bm{a}_2] + [\bm{a}_0, \bm{a}_1], \qquad c' = [\bm{a}_0, \bm{a}_2]\]
Then we have
\[c - c' = [\bm{a}_1,\bm{a}_2] + [\bm{a}_0,\bm{a}_1] - [\bm{a}_0,\bm{a}_2] = \partial_2(d)\]
Therefore, $c$ and $c'$ are homologous 1-chains because their difference is the boundary of the 2-chain $d$.
\end{example}

\begin{example}[Chain Homologous to Zero]\label{eg:Chain Homologous to Zero}
Let simplicial complex representing a filled triangle with vertex set $V = \{\bm{a}_0, \bm{a}_1, \bm{a}_2\}$ is 
\[K = \{\{\bm{a}_0\}, \{\bm{a}_0\}, \{\bm{a}_0\}, \{\bm{a}_0, \bm{a}_1\}, \{\bm{a}_1, \bm{a}_2\}, \{\bm{a}_2, \bm{a}_0\}, \{\bm{a}_0, \bm{a}_1, \bm{a}_2\}\} \]
Now define the 1-chain $c \in C_{1}(K)$
\[
c = [\bm{a}_1, \bm{a}_2] - [\bm{a}_0, \bm{a}_2] + [\bm{a}_0, \bm{a}_1]
\]
Then clearly, $c = \partial_2([\bm{a}_0, \bm{a}_1, \bm{a}_2])$. Thus, $c$ is a boundary and therefore is homologous to zero in the homology group $H_1(K)$. This means that the cycle formed by the three edges does not represent a hole; it actually bounds the 2-simplex (the triangle face), and hence is trivial in homology.
\end{example}

\section{Conclusion}

This article developed simplicial homology for finite simplicial complexes from first principles, emphasizing how orientations, chain groups, and boundary operators assemble into a chain complex whose homology groups encode robust geometric information. The alternating–sign boundary formula and the identity \(\partial_{p-1}\circ \partial_p=0\) were verified on generators and extended linearly, clarifying the role of induced orientations and the cancellation of shared faces. Within this framework, cycles and boundaries were identified as kernels and images of boundary maps, yielding homology groups \(H_p(K)=\ker \partial_p / \operatorname{im}\partial_{p+1}\) that detect connected components, loops, and higher-dimensional voids.

A computation-oriented viewpoint was adopted throughout: fixing orientations, forming boundary matrices, and computing ranks of kernels and images provide a concrete route to \(H_p\). Low-dimensional examples illustrated typical sign conventions and demonstrated when \(H_1\) appears or vanishes, connecting algebraic outcomes to geometric features. The universal mapping property of chain groups further highlighted the linear nature of the constructions and the clarity gained by working on generators before extending by linearity.

Beyond the specific triangulations used in examples, the key message is robustness: while subdivisions alter combinatorics, simplicial homology remains invariant, reflecting properties of the underlying space rather than the chosen complex. Natural extensions include cohomology and the cup product, relative and reduced theories, and computational directions such as algorithmic homology and persistent homology. These avenues continue the central theme of the paper—translating geometry into computable algebra while maintaining clear sign conventions and a transparent linear-algebraic workflow.

For further reading, see the following literature: Hatcher’s Algebraic Topology provides a comprehensive, example‑rich foundation for homology and related invariants (Hatcher, 2002) \citep{Hatcher2002AlgebraicTopology}. Munkres’ Elements of Algebraic Topology offers a calculation‑focused pathway from chain groups and boundary maps to explicit homology computations (Munkres, 1996) \citep{Munkres1996ElementsAlgebraicTopology}. Ghrist’s Elementary Applied Topology supplies intuitive, application‑driven treatments that connect homological ideas to data and dynamics (Ghrist, 2014) \citep{Ghrist2014ElementaryAppliedTopology}. Rotman’s Advanced Modern Algebra develops the algebraic background—groups, modules, and exactness—underpinning chain complexes and universal properties (Rotman, 2002) \citep{Rotman2002AdvancedModernAlgebra}.

\bibliographystyle{unsrtnat}
\bibliography{SimplicialHomologyGroups}
\end{document}